


\documentclass[onefignum,onetabnum,oneeqnum,onethmnum]{siamart190516}

\usepackage{amsmath}
\usepackage{fullpage}
\usepackage{lipsum}
\usepackage{amsfonts}
\usepackage{amssymb}
\usepackage{graphicx}
\usepackage{epstopdf}
\usepackage{algorithmic}
\usepackage{paralist}
\usepackage{mathtools}
\ifpdf
  \DeclareGraphicsExtensions{.eps,.pdf,.png,.jpg}
\else
  \DeclareGraphicsExtensions{.eps}
\fi


\newsiamremark{remark}{Remark}
\newsiamremark{hypothesis}{Hypothesis}
\newsiamthm{claim}{Claim}
\usepackage{comment}
\graphicspath{{fig/}{../fig/}} 
\numberwithin{theorem}{section}

\numberwithin{dfn}{section}

\newtheorem{rmk}{Remark}
\numberwithin{rmk}{section}

\newcommand{\ba}{\begin{array}}
\newcommand{\ea}{\end{array}}
\newcommand{\bea}{\begin{eqnarray}}
\newcommand{\eea}{\end{eqnarray}}
\newcommand{\be}{\begin{equation}}
\newcommand{\ee}{\end{equation}}
\newcommand{\bd}{\begin{displaymath}}
\newcommand{\ed}{\end{displaymath}}
\newcommand{\bi}{\begin{itemize}}
\newcommand{\ei}{\end{itemize}}
\newcommand{\bn}{\begin{enumerate}}
\newcommand{\en}{\end{enumerate}}




\newcommand{\vv}[1]{\boldsymbol{#1}}

\begin{document}


\title{ 
High-order close evaluation of Laplace layer potentials: A differential geometric approach 
}

\author{Hai Zhu%
  \thanks{Department of Mathematics,
  University of Michigan, Ann Arbor, MI 48109 (\email{hszhu@umich.edu}, \email{shravan@umich.edu}).}%
  \and
  Shravan Veerapaneni\footnotemark[1]%
}


\maketitle
\begin{abstract}
This paper presents a new approach for solving the {\em close evaluation problem} in three dimensions, commonly encountered while solving linear elliptic partial differential equations via potential theory. The goal is to evaluate layer potentials close to the boundary over which they are defined. The approach introduced here converts these nearly-singular integrals on a patch of the boundary to a set of non-singular line integrals on the patch boundary using the Stokes theorem on manifolds. A function approximation scheme based on harmonic polynomials is designed to express the integrand in a form that is suitable for applying the Stokes theorem. As long as the data---the boundary and the density function---is given in a high-order format, the double-layer potential and its derivatives can be evaluated with high-order accuracy using this scheme both on and off the boundary. In particular, we present numerical results demonstrating seventh-order convergence on a smooth, warped torus example achieving 10-digit accuracy in evaluating double layer potential at targets that are arbitrarily close to the boundary. 

 
 
\end{abstract}

\begin{keywords}
potential theory, singular integrals, harmonic polynomials, exterior calculus, high-order methods
\end{keywords}

\section{Introduction}
\label{s:intro}
In this paper, we describe a high-order accurate numerical algorithm for evaluating the double-layer potential (DLP) for Laplace equation given by
	\begin{equation} \label{eq:introLapDLP}
		\begin{aligned}
			\mathcal{D}[\mu](\boldsymbol{r}') = \int_{\mathcal{M}} \frac{\partial G(\boldsymbol{r}'-\boldsymbol{r})}{\partial \boldsymbol{n}_{\boldsymbol{r}}}\mu(\boldsymbol{r})dS_{\boldsymbol{r}} 
		\end{aligned}
	\end{equation}	
where $G(\boldsymbol{r}',\boldsymbol{r})=1/4\pi|\boldsymbol{r}'-\boldsymbol{r}|$ is the Green's function for the Laplace equation, $\mu(\boldsymbol{r})$ is a density function, $\mathcal{M}$ is a closed two-dimensional manifold in $\mathbb{R}^3$ and $\boldsymbol{n}_{\boldsymbol{r}}$ is its normal. Layer potentials such as the DLP satisfy the underlying partial differential equation (PDE) by construction and are often employed in mathematical analysis and numerical solution of PDEs \cite{kress1989linear}. Fast and accurate numerical schemes for (\ref{eq:introLapDLP}) are fundamentally important owing to the ubiquity of Laplace equation in sciences and engineering. Moreover, they serve as templates for other linear elliptic PDE solvers via potential theory. 

In practical applications, one needs to evaluate \eqref{eq:introLapDLP} both on and off the surface $\mathcal{M}$. If the target $\boldsymbol{r}'$ is located far off the surface, a smooth quadrature rule designed for the given surface representation can be applied efficiently. However, the integrand in \eqref{eq:introLapDLP} becomes weakly-singular for on-surface targets and nearly-singular for targets located close to the surface. In both cases, specialized quadrature rules are necessary to achieve desired order of accuracy. While the subject of developing high-order rules for weakly-singular integrals is a classical one, nearly-singular integration is an active area of research. For two-dimensional problems (where $\mathcal{M}$ is a curve on the plane), significant progress has been made on accurate evaluation schemes for 
nearly-singular integrals, some recent works include  \cite{rachh2017fast, carvalho2018asymptotic, rahimian2018ubiquitous, af2018adaptive,  wu2019solution} (also see references therein). 
In contrast, fewer number of works exist for high-order close evaluation in the case of three-dimensional problems, owing to the complexity of handling a stronger kernel singularity over high-order surface meshes. 
 \begin{figure}[!ht]
    \centering
    \includegraphics[height=0.28\textwidth]{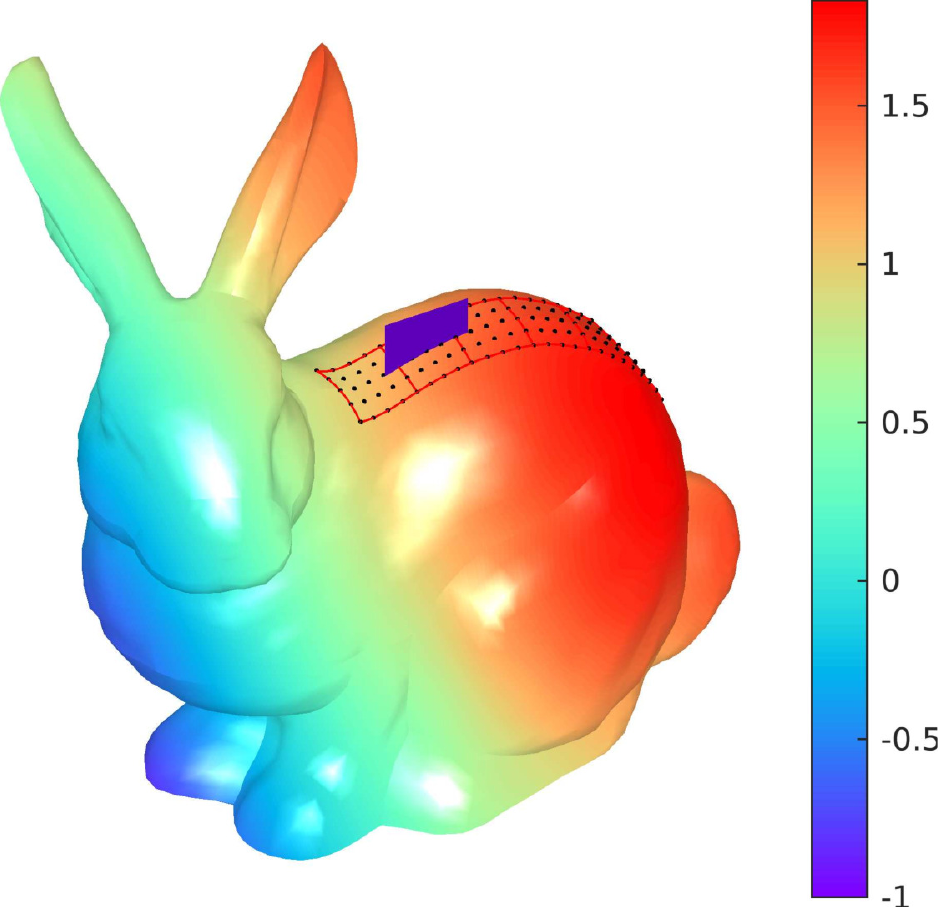}
    \includegraphics[height=0.28\textwidth]{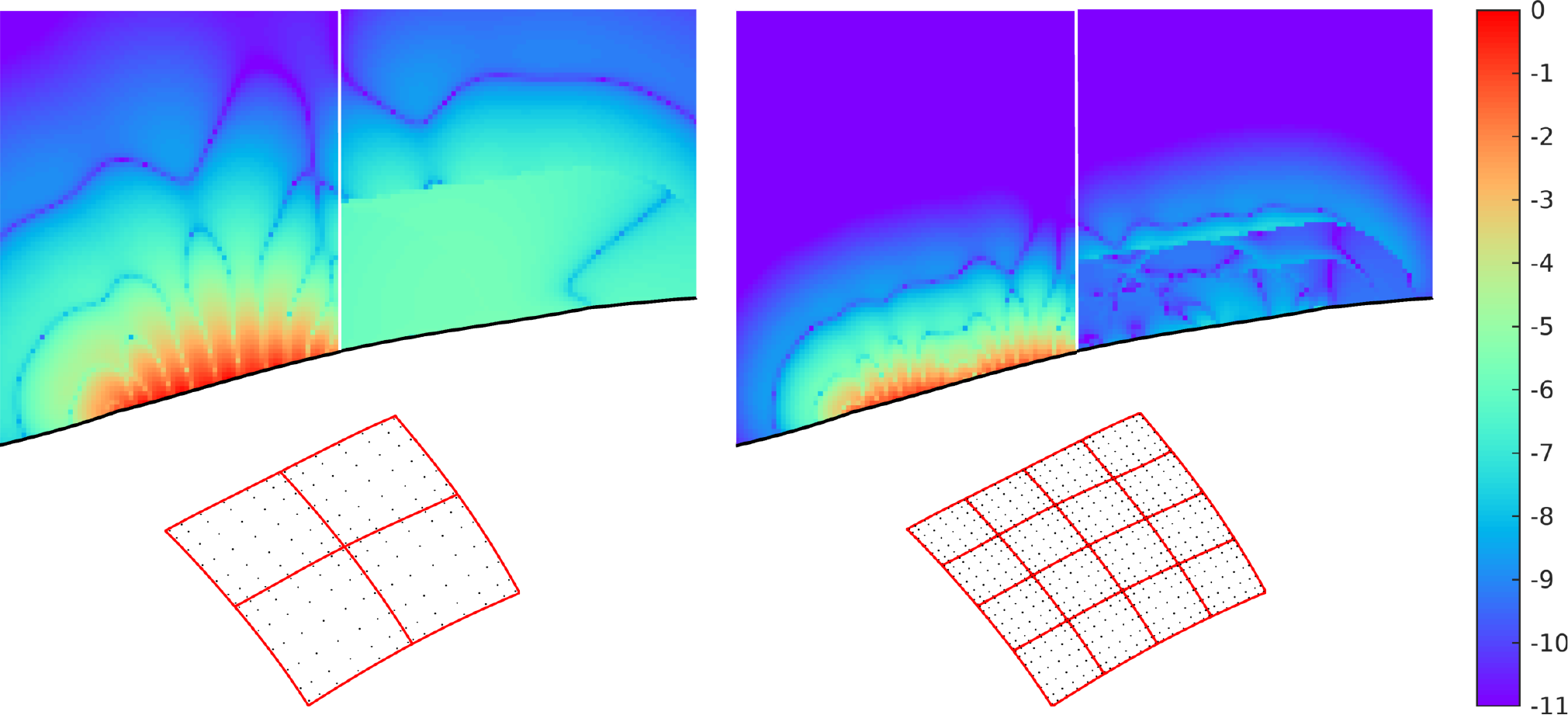}
    \caption{ One of the key advantages of the close evaluation scheme developed in this paper is its ease of handling arbitrary meshes. Here, we demonstrate its performance on the Stanford bunny triangulation data \cite{turk1994zippered}. We used the interactive sketch-based quadrangulation method of \cite{takayama2013sketch} to create high-quality quad remeshings locally as shown on the top of the bunny. We evaluate the DLP at targets that are located arbitrarily close to the surface as shown on top in blue color. The surface is colored by the density function $\mu$, which  was set as  $\mu(x,y,z)=e^{xy}-1+x+\sin(x^4+1/2y^3)+y-1/2y^2+1/5y^6+z$. (Middle and right) Given this setup, we demonstrate the performance of the new scheme by considering one of the quads, successively refining it two-fold and visualizing the errors due to direct evaluation of DLP via high-order smooth quadrature rule (left half) and the new close evaluation scheme (right half). We note that while the errors stagnate in a band close to the surface in the case of smooth quadrature, the new scheme achieves uniform accuracy upto 10-digits. More details on this experiment are provided in Section \ref{NumExp}.} \label{fig:bunny}
\end{figure}

{\em Synopsis of the new approach.} Consider a subdomain $D \subset \mathcal{M}$. A surface integral on $D$ can be converted into a line integral on $\partial D$ using the Stokes theorem on manifolds as long as the integrand is an exact form \cite{spivak2018calculus}. Clearly, this condition is not necessarily satisfied in the case of DLP (\ref{eq:introLapDLP}) for an arbitrary $\mu$. The main idea here is that we can construct basis functions for approximating $\mu$ in $D$, which when multiplied by the kernel in (\ref{eq:introLapDLP}) are exact forms. Thereby, when the target $\boldsymbol{r}'$ is close to $D$, we can apply this procedure to convert a nearly-singular surface integral on $D$ to a non-singular line integral on $\partial D$ (assuming $\boldsymbol{r}'$ is not close to $\partial D$). In this paper, we construct such basis functions using harmonic polynomials and quaternion algebra. The scheme is relatively insensitive to the underlying high-order surface discretization. Once the density function is expressed in our basis on $D$ (e.g., {\em via} collocation), the layer potential evaluation is carried out in a similar fashion as a product integration scheme, with the caveat that some smooth line integrals on $\partial D$ need to computed numerically in addition. 

{\em Related work.} Here, we restrict our discussion to closely related recent works; a more extensive literature survey on singular and near-singular integration schemes can be found in \cite{hao2014high, perez2019harmonic, morse2020robust}. In the first class of methods, the issue of close evaluation is overcome by exploiting the smoothness of DLP away from $\mathcal{M}$. In the quadrature-by-expansion (\texttt{QBX}) scheme, originally proposed in \cite{barnett2014evaluation,klockner2013quadrature}, the DLP is approximated at centers away from $\mathcal{M}$ using high-order local expansions, which are valid at points closer to or on $\mathcal{M}$. Extension of \texttt{QBX} to three-dimensional problems was recently explored in \cite{siegel2018local, wala2019fast, wala2020optimization}. A related algorithm is the \texttt{hedgehog} scheme of \cite{morse2020robust}, which in turn is an extension of the earlier work by Ying et al.~\cite{ying2006high}. Similar to \texttt{QBX}, \texttt{hedgehog} exploits the smoothness of \eqref{eq:introLapDLP} away from the boundary and evaluates it at carefully chosen ``check'' points along a line passing through the target located close to $\mathcal{M}$ and extrapolates the solution to the target.  

Another popular class of methods are those based on singularity subtraction, wherein, the kernel in \eqref{eq:introLapDLP} is split into a singular part and a smooth part, with the action of the former treated analytically. While low-order variants are often used in practice for three-dimensional problems, high-order extension was presented in \cite{helsing2013higher} for toroidal geometries. Recently, an alternative strategy, termed as harmonic density interpolation (\texttt{HDI}), is presented in  \cite{perez2019harmonic} which focuses on the density instead of the kernel. It regularizes the kernel singularities by splitting the density into two parts: one whose convolution with the kernel can be treated analytically and the other whose derivatives vanish to prescribed order as the target $\boldsymbol{r}'$ approaches the source $\boldsymbol{r}$. Lastly, regularized kernel methods for 3D close evaluation were also developed recently in \cite{beale2015simple, tlupova2019regularized}; high-order accuracy is achieved by introducing correction terms to control the regularization error.

Our approach shares many of the desirable features of \texttt{QBX} and \texttt{hedgehog} schemes including, prominently, the ease of integration with fast algorithms such as the fast multipole method (FMM) \cite{greengard1987fast} since it doesn't modify the kernel and affects the local part evaluation only. On the other hand, the fact that all the computational variables stay on the manifold $\mathcal{M}$ in our scheme offers further advantages such as avoiding the need for optimizing auxiliary parameters like local expansion centers or check points, which may be challenging in situations such as nearly self-touching geometries. Although both \texttt{HDI} and our scheme employ harmonic polynomials for approximating the density function, their usage is fundamentally different in both schemes. In \cite{perez2019harmonic}, harmonic polynomial approximations are sought which cancel the kernel singularity to high-order as $\boldsymbol{r}' \rightarrow \boldsymbol{r}$; it is unclear if such approximations can be constructed to arbitrarily high-order in three-dimensions (\cite{perez2019harmonic} demonstrates third-order convergence). In contrast, our scheme only requires smoothness of the density. Lastly, a key advantage of our approach is that it works on any user supplied meshes without the need for geometry processing; high-order convergence is guaranteed as long as the boundary and the density function are specified in a high-order format (an example is shown in Figure \ref{fig:bunny}). 

In our view, our work is most closely related to the work of Helsing-Ojala \cite{helsing2008evaluation}, which developed a panel-based close evaluation scheme in two dimensions by approximating the density using monomial basis and evaluating their product with the nearly-singular kernels via recurrences. This approach has been shown to offer rapid and accurate solution of several elliptic problems \cite{helsing2008evaluation, barnett2015spectrally, wu2019solution}. The quaternionic harmonic polynomial approximation scheme of the density, introduced in this work, can be viewed as a 3D analogue of their complex monomial approximation scheme. Similarly, we also employ recurrences to evaluate the product of nearly-singular kernels and polynomial basis functions. We note that the differential geometry framework presented here is applicable both for two- and three-dimensional problems, thereby, is a unifying approach.

{\em Limitations.} In this work, we restrict our attention to the Laplace layer potentials only. While our close evaluation scheme can be extended to other linear elliptic PDE kernels, it is by no means trivial: the density approximation scheme needs to be tailored for each individual kernel. We note, however, that there are alternative approaches to directly apply our scheme to other PDE problems---e.g., Stokes potentials can be expressed as a linear combination of Laplace potentials and their derivatives \cite{tornberg2008fast}. Lastly, the method described in this paper cannot be applied directly to globally parameterized surfaces (e.g., spherical harmonic representations). One remedy is to maintain an auxiliary adaptive surface mesh just for the purposes of close evaluation.  

The remainder of this paper is organized as follows. In Section~\ref{ExtC}, we review some preliminaries on exterior calculus and describe the 3D close evaluation problem using the language of exterior calculus. In Section~\ref{Scheme}, the key ideas of the product integration scheme are outlined, followed by a presentation of our quaternionic approximation. Then in Section~\ref{NumerScheme}, we present the overall implementation of our close evaluation scheme. We demonstrate the performance of our algorithm on a variety of examples in Section~\ref{NumExp}, followed by conclusions and discussion on future directions in Section~\ref{Concl}.

\section{Mathematical preliminaries}
\label{ExtC}
The use of exterior calculus greatly simplifies the presentation of our numerical algorithms even though, strictly speaking, is not required for their development. In this section, we review some basic concepts but refer the reader to \cite{arnold2006finite, spivak2018calculus} (or other standard textbooks) for a more thorough introduction to this subject. 

\subsection{Exterior algebra}
	If $V$ is a vector space over $\mathbb{R}$, we will denote by $Alt^k(V)$ the space of alternating $k$-linear maps $V\times\cdots\times V\rightarrow \mathbb{R}$. We refer to such maps as alternating algebraic $k$-forms. A $k$-linear map $\omega\in Alt^k(V)$ is called alternating if
	\begin{equation}
		\begin{aligned}
			\omega(\boldsymbol{v}_1,\cdots,\boldsymbol{v}_i,\cdots,\boldsymbol{v}_j,\cdots,\boldsymbol{v}_k) = -\omega(\boldsymbol{v}_1,\cdots,\boldsymbol{v}_j,\cdots,\boldsymbol{v}_i,\cdots,\boldsymbol{v}_k).
		\end{aligned}
	\end{equation}

	 Thus, an algebraic $k$-form on $V$ assigns to a $k$-tuple $(\boldsymbol{v}_1,\cdots,\boldsymbol{v}_k)$ of elements of $V$ a real number $\omega(\boldsymbol{v}_1,\cdots,\boldsymbol{v}_k)$, with the mapping linear in each argument, and reversing sign when any two arguments are interchanged.
	 
	 Given $\omega\in Alt^k(V)$ and $\eta\in Alt^l(V)$, simple tensor product of $\omega$ and $\eta$ is usually not an alternating algebraic $(k+l)$-form. We instead employ the {\em exterior product} or {\em wedge product} $\omega\wedge\eta\in Alt^{k+l}$, defined by
	 \begin{equation}
	 	\begin{aligned}
	 		&(\omega\wedge\eta)(\boldsymbol{v}_1,\cdots,\boldsymbol{v}_{k+l})\\
	 		&\hspace{0.5in} = \sum_{\sigma\in Sh_{k,l}}\mathrm{sgn}(\sigma)\omega(\boldsymbol{v}_{\sigma(1)},\cdots,\boldsymbol{v}_{\sigma(k)})\eta(\boldsymbol{v}_{\sigma(k+1)},\cdots,\boldsymbol{v}_{\sigma(k+l)}), \quad \boldsymbol{v}_i\in V,
	 	\end{aligned}
	 \end{equation}
	 where $Sh_{k,l}$ is the subset of $(k,l)$ permutations of the set $\{1,2,\cdots,k+l\}$ such that each element $\sigma\in Sh_{k,l}$ satisfies $\sigma(1)<\sigma(2)<\cdots<\sigma(k)$, and $\sigma(k+1)<\sigma(k+2)<\cdots<\cdots<\sigma(k+l)$. The exterior product is both bilinear and associative.
	 
    Throughout this paper, $V$ will be $\mathbb{R}^3$, and $k$ will be $1$ or $2$. In $\mathbb{R}^3$, the canonical basis $\boldsymbol{e}_1, \boldsymbol{e}_2$ and  $\boldsymbol{e}_3$ gives rise to a natural dual basis of $Alt^1(\mathbb{R}^3)$, the space of covectors. This dual basis will often be denoted by $dx, dy$ and $dz$. These basis elements are linear maps (not to be confused with infinitely small change in the variable). For example, 
    $dx (\boldsymbol{v}) = \boldsymbol{e}_1\cdot \boldsymbol{v}=v_1$. 
    The wedge product is also an operation connecting the various $Alt^k(V)$ spaces. 
    For example, the basis for $Alt^2(\mathbb{R}^3)$ can be written using wedge products of $Alt^1(\mathbb{R}^3)$ basis elements as $dx\wedge dy, dy\wedge dz$ and $dz\wedge dx$. 
	 
\subsection{Exterior calculus on manifolds}
	At each point $\boldsymbol{r}$ of a sufficiently smooth manifold $\mathcal{M}$ of dimension $n$, the tangent space $T_{\boldsymbol{r}}\mathcal{M}$ is a vector space of dimension $n$ (in our case, $n=2$). We could think of this as a local coordinate system. If the selection of a vector $\boldsymbol{v}(\boldsymbol{r})$ at $\boldsymbol{r}$ is made in each $T_{\boldsymbol{r}}\mathcal{M}$, we obtain a vector field. 
	
	Applying the exterior algebra construction to the tangent spaces, we obtain the exterior forms bundle $(\boldsymbol{r},\eta)$ with $\boldsymbol{r}\in \mathcal{M}$, $\eta\in Alt^k(T_{\boldsymbol{r}}\mathcal{M})$. A differential $k$-form is a map $\omega$ which associates to each $\boldsymbol{r}\in \mathcal{M}$ an element $\omega_{\boldsymbol{r}}\in Alt^k(T_{\boldsymbol{r}}\mathcal{M})$. If the map $\boldsymbol{r}\in \mathcal{M} \rightarrow \omega_{\boldsymbol{r}}(\boldsymbol{v}_1(\boldsymbol{r}),\cdots,\boldsymbol{v}_k(\boldsymbol{r}))\in \mathbb{R}$ is smooth whenever the $\boldsymbol{v}_i$'s are smooth vector fields, then we say that $\omega$ is a smooth differential $k$-form. We denote by $\Lambda^k(\mathcal{M})$ the space of all smooth differential $k$-forms on $\mathcal{M}$. As $\boldsymbol{r}$ moves around smoothly on $\mathcal{M}$, $\omega$ provides a smoothly varying algebraic $k$-form at each tangent space $T_{\boldsymbol{r}}\mathcal{M}$. 
	
	The exterior product of differential forms can be defined pointwise from exterior product of algebraic forms
	\begin{equation}
		\begin{aligned}
			(\omega\wedge\eta)_{\boldsymbol{r}} = \omega_{\boldsymbol{r}}\wedge \eta_{\boldsymbol{r}}.
		\end{aligned}
	\end{equation}
	
	If $D$ is an oriented submanifold of $\mathcal{M}$, and $\omega$ is a continuous $k$-form, then the integral $\int_D \omega$ is well-defined. 
	
	The exterior derivative $d$ is a linear operator that maps $\Lambda^k(\mathcal{M})$ into $\Lambda^{k+1}(\mathcal{M})$ for each $k\geq 0$. We give a formula for the case $\mathcal{M}$ is a domain in $\mathbb{R}^n$. For given $\omega\in \Lambda^k(\mathcal{M})$ and vectors $\boldsymbol{v}_1,\cdots,\boldsymbol{v}_k$, we obtain a smooth mapping $\mathcal{M}\rightarrow \mathbb{R}$ given by $\boldsymbol{r}\rightarrow \omega_{\boldsymbol{r}}(\boldsymbol{v}_1,\cdots,\boldsymbol{v}_k)$. We then define
	\begin{equation}
		\begin{aligned}
			d\omega_{\boldsymbol{r}}(\boldsymbol{v}_1,\cdots,\boldsymbol{v}_k) = \sum_{j=1}^{k+1}(-1)^{j+1}\partial_{\boldsymbol{v}_j}\omega_{\boldsymbol{r}}(\boldsymbol{v}_1,\cdots,\boldsymbol{\hat{v}}_j,\cdots,\boldsymbol{v}_{k+1}),
		\end{aligned}
	\end{equation} 
	where the hat is used to indicate a suppressed argument. If $\omega\in \Lambda^k(\mathcal{M})$ and $\eta\in \Lambda^l(\mathcal{M})$, then
	\begin{equation}
		\begin{aligned}
			d(\omega\wedge\eta) = d\omega\wedge\eta + (-1)^k\omega\wedge d\eta.
		\end{aligned}
	\end{equation}

\subsection{Integral equation formulation} 
    Consider the following interior Dirichlet problem for the Laplace equation in a three-dimensional domain $\mathbb{D}$ bounded by $\mathcal{M}$,
    \begin{equation} \label{eq:laplace_bvp}
     \Delta u = 0 \quad \text{in} \quad \mathbb{D}, \quad u = g \quad\text{on}\quad\mathcal{M}.    
    \end{equation}
    We can employ an indirect integral equation formulation \cite{kress1989linear} for solving this problem, wherein, we set $u(\vv{r}') = \mathcal{D}[\mu](\vv{r}')$, the double-layer potential as defined in \eqref{eq:introLapDLP}. This ansatz satisfies the Laplace equation by construction and enforcing the boundary condition yields the following boundary integral equation for the unknown $\mu$:
    \begin{equation} \label{eq:laplace}
    -\frac{1}{2}\mu (\vv{r}') + \mathcal{D}[\mu](\vv{r}') = g(\vv{r}'), \quad \forall \quad \vv{r}' \in \mathcal{M}, 
     \end{equation}
    where the evaluation of $\mathcal{D}$ on $\mathcal{M}$ is performed in the principal value sense. Solving this BIE for $\mu$, one can evaluate the solution $u$ at any target in the domain by using (\ref{eq:introLapDLP}). Similarly, other Laplace boundary value problems can be recast as BIEs using potential theory (e.g., see \cite{kress1989linear}). 
    
    Now, let's express the DLP evaluation as integration of differential forms.  On the manifold $\mathcal{M}$, we have \cite[Thm.~5-6]{spivak2018calculus}:
    	\begin{equation}\label{eq:NormalArea}
    			n_1 dS_{\boldsymbol{r}}  =  dy\wedge dz, \quad n_2 dS_{\boldsymbol{r}} = dz\wedge dx, \quad n_3 dS_{\boldsymbol{r}} = dx\wedge dy.
    	\end{equation}
    Therefore, the DLP \eqref{eq:introLapDLP} can be written as
    	\begin{equation} \label{eq:LapDLPext}
    		\begin{aligned}
    			\mathcal{D}[\mu](\boldsymbol{r}') &= \int_{\mathcal{M}} \frac{(\boldsymbol{r}'-\boldsymbol{r})\cdot \boldsymbol{n}_{\boldsymbol{r}}}{4\pi|\boldsymbol{r}'-\boldsymbol{r}|^3}\mu(\boldsymbol{r}) dS_{\boldsymbol{r}} \\
    			&= \int_{\mathcal{M}} \frac{(x'-x)\mu(\boldsymbol{r})}{4\pi|\boldsymbol{r}'-\boldsymbol{r}|^3} dy\wedge dz + \frac{(y'-y)\mu(\boldsymbol{r})}{4\pi|\boldsymbol{r}'-\boldsymbol{r}|^3} dz\wedge dx + \frac{(z'-z)\mu(\boldsymbol{r})}{4\pi|\boldsymbol{r}'-\boldsymbol{r}|^3} dx\wedge dy,
    		\end{aligned}
    	\end{equation}
    where $\boldsymbol{r}'=(x',y',z')$ and $\boldsymbol{r}=(x,y,z)$.

\section{Density approximation and exact form construction}
\label{Scheme}

In this section, we systematically introduce the key ideas required to develop our numerical scheme. We provide the necessary analytical and algebraic background employed in Section~\ref{NumerScheme}. We briefly review Stokes theorem and Poincar\'e's Lemma to illustrate our basic ideas in Section~\ref{subsec:s_thm&p_lem}, and introduce a quaternionic approximation scheme in Section~\ref{subsec:q_approx}. 


\subsection{Stokes theorem and Poincar\'e's lemma}
\label{subsec:s_thm&p_lem}
    We will be relying on the Stokes theorem to evaluate \eqref{eq:LapDLPext} when $\boldsymbol{r}'$ is close to $\mathcal{M}$. Using exterior calculus, one can summarize the Stokes theorem on a patch $D$ in an elegant way \cite{spivak2018calculus}:
    \begin{theorem} \label{thm:Stokes}	
     (Stokes theorem) If $D$ is a compact oriented 2-manifold, for any smooth 1-form $\omega$ defined on $D$, the following holds,
    \begin{equation}\label{eq:stokes_thm}
    		\begin{aligned}
    			\int_{D} d\omega = \int_{\partial D} \omega.
    		\end{aligned}
    	\end{equation}
    \end{theorem}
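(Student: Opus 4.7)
The plan is to prove the classical Stokes theorem by a standard two-step reduction: first localize using a partition of unity, then verify the identity in local coordinates via the fundamental theorem of calculus.

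First I would exploit the compactness of $D$ to cover it by finitely many coordinate charts $\{(U_\alpha, \varphi_\alpha)\}$, where each $\varphi_\alpha$ is a diffeomorphism either onto an open subset of $\mathbb{R}^2$ (an interior chart) or onto an open subset of the closed upper half-plane $\mathbb{H}=\{(u,v)\in\mathbb{R}^2 : v\ge 0\}$ (a boundary chart), chosen so that the chart orientations agree with that of $D$. Subordinate to this cover, I would select a smooth partition of unity $\{\rho_\alpha\}$ and write $\omega=\sum_\alpha(\rho_\alpha\omega)$. Since both $d$ and integration are linear, and since $d$ satisfies the Leibniz identity shown in the excerpt, it suffices to establish the identity for each compactly supported summand $\rho_\alpha\omega$ separately, reducing the problem to proving the identity for a smooth $1$-form that is compactly supported in a single chart.

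Next I would transport the computation to the model half-plane via the pullback $\varphi_\alpha^{\,*}$, which commutes with $d$ and respects integration by the change-of-variables formula. In local coordinates $(u,v)$, the form becomes $\omega=P\,du+Q\,dv$ with $P,Q$ smooth and compactly supported, and a direct calculation gives $d\omega=(\partial_u Q-\partial_v P)\,du\wedge dv$. For an interior chart (supported in $\mathbb{R}^2$), Fubini's theorem combined with the fundamental theorem of calculus makes both terms integrate to zero, matching the empty boundary integral. For a boundary chart (supported in $\mathbb{H}$), the same bookkeeping gives
\[ \int_{\mathbb{H}} d\omega \;=\; \int_{-\infty}^{\infty} P(u,0)\,du, \]
since $Q$ vanishes at infinity and $P$ vanishes as $v\to\infty$, and this coincides with $\int_{\partial\mathbb{H}}\omega$ under the induced orientation on the line $v=0$.

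The main obstacle is not analysis but orientation bookkeeping: one must verify that the boundary orientation induced on $\partial D$ by the standard \emph{outward-normal-first} convention pulls back to exactly the orientation on $\partial\mathbb{H}$ that yields the sign displayed above, and that transitions between overlapping oriented charts preserve these conventions so that adjacent boundary contributions do not double-count or cancel incorrectly. Granted the convention that the outward-pointing normal followed by the boundary orientation reproduces the orientation of $D$, the pullback in a boundary chart sends the outward normal of $\partial D$ to $-\partial_v$ and the induced boundary orientation to $+\partial_u$, producing the correct sign; summing the chart-wise identities over $\alpha$ then yields the global statement.
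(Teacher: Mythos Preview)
Your proof outline is correct and is the standard partition-of-unity reduction found in textbooks such as Spivak's \emph{Calculus on Manifolds}. Note, however, that the paper does not supply its own proof of this theorem at all: it simply states Stokes' theorem with a citation to \cite{spivak2018calculus} and moves on, treating it as background material. So there is no paper-proof to compare against; your argument is precisely the one the cited reference gives.
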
	
    \begin{rmk}
    The advantage of using Stokes theorem to reduce a surface integral of $2$-form $d\omega$ on $D$ to a line integral of $1$-form $\omega$ is essentially two-fold. One is that we have localized the work involved in evaluating layer potential on part of the integration surface. The other comes from the benefit of dimensionality reduction. Essentially, this eliminates singularity that populates the two dimensional manifold to only boundaries of its panel discretization, which has a measure zero. This further helps in accurate evaluation of layer potentials when targets are extremely close to or on the surface. 
    \end{rmk} 	
	The key idea is to use Stokes theorem to evaluate the double-layer potential when a target $\boldsymbol{r}'$ is close to $D$. But Stokes theorem does not help with finding a suitable $\omega$ such that
	\begin{equation} \label{eq:omega_approx}
		\begin{aligned}
		\frac{(\vv{r}'-\vv{r})\cdot \vv{n}_{\vv{r}}}{4\pi|\vv{r}'-\vv{r}|^3}\mu(\vv{r}) dS_{\vv{r}} = 	\frac{(x'-x)\mu(\boldsymbol{r})}{4\pi|\boldsymbol{r}'-\boldsymbol{r}|^3} dy\wedge dz + \frac{(y'-y)\mu(\boldsymbol{r})}{4\pi|\boldsymbol{r}'-\boldsymbol{r}|^3} dz\wedge dx + \frac{(z'-z)\mu(\boldsymbol{r})}{4\pi|\boldsymbol{r}'-\boldsymbol{r}|^3} dx\wedge dy \approx d\omega.
		\end{aligned}
	\end{equation}	
    To address the challenges in systematically finding $\omega$, we introduce one additional tool in differential geometry, Poincar\'e's lemma \cite[Thm.~4-11]{spivak2018calculus}: for every differential form on an open star-shaped subset $D$ of $\mathbb{R}^n$, suppose $d\alpha=0$ for $\alpha\in \Lambda^k(D)$, then locally there is some $\omega\in\Lambda^{k-1}(D)$ such that $d\omega = \alpha$.  
    The proof of the lemma considers a $k-$form,
    	\begin{equation} \label{eq:alpha} \alpha = \sum_{i_1<\cdots<i_k} g_{i_1,\cdots,i_k} dx^{i_1}\wedge\cdots\wedge dx^{i_k}, 
    	\end{equation}
    and shows that $(k-1)$-form $\omega$ defined by
    	\begin{equation} \label{eq:omega}
    			\omega =  \sum_{i_1<\cdots<i_k}\sum_{l=1}^k (-1)^{(l-1)} \left(\int_0^1 t^{k-1}g_{i_1,\cdots,i_k}(t\boldsymbol{x})dt\right)x^{i_l} \,
    				dx^{i_1}\wedge\cdots\wedge\hat{dx^{i_l}}\wedge\cdots\wedge dx^{i_k}
    \end{equation}
    satisfies $d\omega = \alpha$ given $d\alpha=0$. 	
	
    Based on this result, we can accomplish the 2-to-1 form conversion as written in \eqref{eq:omega_approx}. This construction process can be viewed as finding the vector potential whose curl is a given vector field. 

    A simplified version of the Poincar\'e's Lemma relevant to our setting can be summarized as follows. 
	\begin{lemma}(2-to-1 form conversion)\label{lemma:potential}
	Consider a compact oriented $2$-dimensional manifold $D$ in $\mathbb{R}^3$. Let
		\begin{equation}
			\begin{aligned}
				\alpha = g_1(\boldsymbol{r})dy\wedge dz + g_2(\boldsymbol{r}) dz\wedge dx + g_3(\boldsymbol{r}) dx\wedge dy,
			\end{aligned}
		\end{equation}
		be a differential $2$-form on $D$.
		If $d\alpha = 0$ (i.e., $\nabla\cdot(g_1,g_2,g_3)=0$), then
		\begin{equation}\label{eq:poincare_r3}
			\begin{aligned}
				\omega =& \left(\int_0^1\left(tzg_2(t\boldsymbol{r})-tyg_3(t\boldsymbol{r})\right)dt\right) dx+\left(\int_0^1\left(txg_3(t\boldsymbol{r})-tzg_1(t\boldsymbol{r})\right)dt\right)dy\\
					&\hspace{2in}+\left(\int_0^1\left(tyg_1(t\boldsymbol{r})-txg_2(t\boldsymbol{r})\right)dt\right)dz
			\end{aligned}
		\end{equation}
		satisfies $d\omega=\alpha$.
	\end{lemma}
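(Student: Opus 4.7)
The plan is to view the statement as the $n=3$, $k=2$ specialization of the general Poincar\'e Lemma formula (\ref{eq:omega}), and then verify the identity $d\omega=\alpha$ by direct computation — differentiating $\omega$ under the integral sign and invoking the hypothesis $d\alpha=0$ at a single crucial step.

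First I would do the bookkeeping that matches $\alpha$ with the general form (\ref{eq:alpha}). Writing $\alpha$ in the ordered basis $\{dx^1\wedge dx^2,\,dx^1\wedge dx^3,\,dx^2\wedge dx^3\}$, one reads off $g_{12}=g_3$, $g_{13}=-g_2$, $g_{23}=g_1$, with the sign on $g_{13}$ arising from $dz\wedge dx = -dx\wedge dz$. Substituting these coefficients into (\ref{eq:omega}) with $k=2$ produces six terms indexed by $((i_1,i_2),l)$; collecting the coefficients of $dx$, $dy$, $dz$ reproduces the three scalar integrals displayed in (\ref{eq:poincare_r3}). This part is essentially symbol pushing.

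Second, to verify $d\omega=\alpha$ I would write $\omega = A\,dx + B\,dy + C\,dz$ with $A$, $B$, $C$ read off from (\ref{eq:poincare_r3}) and compute
\begin{equation*}
d\omega = (\partial_y C - \partial_z B)\,dy\wedge dz + (\partial_z A - \partial_x C)\,dz\wedge dx + (\partial_x B - \partial_y A)\,dx\wedge dy.
\end{equation*}
Differentiating under the integral and applying the chain rule $\partial_{x_k}[g_i(t\vv{r})] = t\,(\partial_{x_k} g_i)(t\vv{r})$ yields, for the $dy\wedge dz$ coefficient, an integrand of the shape
\begin{equation*}
2t\,g_1(t\vv{r}) + t^2\bigl(y\,\partial_y g_1 + z\,\partial_z g_1 - x\,\partial_y g_2 - x\,\partial_z g_3\bigr)\big|_{t\vv{r}}.
\end{equation*}

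The crux of the argument — and the step I expect to be the main obstacle in keeping the algebra clean — is to invoke $d\alpha=0$, i.e.\ $\partial_x g_1 + \partial_y g_2 + \partial_z g_3 = 0$, to rewrite $-x\,\partial_y g_2 - x\,\partial_z g_3$ as $x\,\partial_x g_1$. The integrand then telescopes to $2t\,g_1(t\vv{r}) + t^2\,(\vv{r}\cdot\nabla g_1)(t\vv{r}) = \tfrac{d}{dt}\bigl(t^2 g_1(t\vv{r})\bigr)$, and the fundamental theorem of calculus delivers $[t^2 g_1(t\vv{r})]_0^1 = g_1(\vv{r})$, matching the $dy\wedge dz$ coefficient of $\alpha$. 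The $dz\wedge dx$ and $dx\wedge dy$ coefficients follow by cyclic permutation of $(x,y,z)$ and $(g_1,g_2,g_3)$. Finally, since the integrals are taken along the straight segment from $\vv 0$ to $\vv r$, this segment must lie in $D$; this is the implicit star-shaped hypothesis inherited from the Poincar\'e Lemma, and in the application to close evaluation one applies the lemma locally (or after shifting the origin to a convenient interior point of the patch).
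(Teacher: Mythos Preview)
Your proposal is correct and follows essentially the same route as the paper's proof: write $\omega = P\,dx + Q\,dy + R\,dz$, compute $d\omega$ via the curl formula, differentiate under the integral sign, invoke $\nabla\cdot\vv{g}=0$ to convert $-x(\partial_y g_2+\partial_z g_3)$ into $x\,\partial_x g_1$, recognize the integrand as $\tfrac{d}{dt}(t^2 g_1(t\vv{r}))$, and integrate. The preliminary bookkeeping matching (\ref{eq:poincare_r3}) to the general formula (\ref{eq:omega}) and the remark on the star-shaped hypothesis are helpful context but not part of the paper's verification, which is purely the direct computation you describe.
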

	\begin{proof}
		See Appendix~\ref{appx:verify_poin}
	\end{proof}
	
    Consequently, it is possible to convert a surface integral into a line integral as long as the vector $\vv{g}$ is divergence-free. For example, if $\mu$ is a scalar in  \eqref{eq:omega_approx}, it is clear that the above Lemma applies since 
    \[\nabla \cdot \frac{\vv{r}' - \vv{r}}{|\vv{r}' - \vv{r}|^3} = \nabla \cdot \nabla \frac{1}{|\vv{r}' - \vv{r}|} = 0.  
    \] 
    In the general case, our goal is to find a high-order approximation scheme for $\mu$ which makes the vector $(\vv{r}' - \vv{r})\mu(\vv{r})/|\vv{r}' - \vv{r}|^3$ divergence-free. Clearly, standard polynomial approximation schemes (e.g., tensor-product monic polynomials) won't yield the desired result. In the next subsection, we present an approximation scheme based on harmonic polynomials and quaternionic representations that accomplishes this task.

    The key insight that motivates our approach is summarized in Lemma \ref{lemma:harmonic}, but first let's review some preliminaries on quaternions. Let $\vv{i}, \vv{j}$ and $\vv{k}$ be the standard quaternion units, that is, they satisfy the identities
    	\begin{equation}
    	    \vv{i}^2 = \vv{j}^2 = \vv{k}^2 = \vv{ijk} = -1, \quad \vv{ij} = \vv{k} = -\vv{ji}, \quad \vv{jk} = \vv{i} = -\vv{kj},\quad \vv{ki} = \vv{j} = -\vv{ik}.
    \label{quaternion}	\end{equation}
    A quaternionic  function $g$ is comprised of a scalar part $g_0$ and a vector part $\vv{g} = (g_1, g_2, g_3)$, and written as 
    	\[g(\vv{r}) = g_0(\vv{r}) + g_1(\vv{r}) \vv{i} + g_2(\vv{r}) \vv{j} + g_3(\vv{r}) \vv{k}. \]
    Alternatively, one can write the quaternion in the pair form as $g = (g_0, \vv{g})$. 
    For any given vector $\vv{g}$, we can define a quaternion $g$ as above and if not specified, $g_0 = 0$ is assumed by default. 
    Using \eqref{quaternion}, we can easily verify that the product of two quaternions $g$ and $h$ to be
    	\[ gh = (g_0 h_0 - \vv{g}\cdot\vv{h}, \, g_0\vv{h} + h_0\vv{g} + \vv{g} \times \vv{h}). \]
    With these preliminaries, we can now state and prove the following lemma that motivates our density approximation scheme. 
	\begin{lemma} \label{lemma:harmonic}
		Let $\vv{g} = \nabla \psi$ and $\vv{f} = \nabla \phi$, where $\phi$ and $\psi$ are some harmonic functions.  Then each component of the quaternionic 2-form $g n_r f dS_{\boldsymbol{r}}$ is exact. 
	\end{lemma}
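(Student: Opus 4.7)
The plan is to reduce the claim to four applications of Lemma~\ref{lemma:potential}. Writing $N := n_{\vv{r}}\, dS_{\vv{r}}$ as the quaternion-valued 2-form $\vv{i}\, dy\wedge dz + \vv{j}\, dz\wedge dx + \vv{k}\, dx\wedge dy$, the product $gNf$ decomposes into a scalar 2-form and three vector-component 2-forms; by Lemma~\ref{lemma:potential} each such scalar 2-form is exact as soon as its coefficient vector in the basis $(dy\wedge dz,\, dz\wedge dx,\, dx\wedge dy)$ is divergence-free. So I would identify those four coefficient vectors and check divergence-freeness for each.

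First I would execute the quaternion product using the pair-form rule $pq = (p_0 q_0 - \vv{p}\cdot\vv{q},\, p_0\vv{q} + q_0\vv{p} + \vv{p}\times\vv{q})$ twice with $g = (0,\vv{g})$, $N = (0,\vv{N})$, $f = (0,\vv{f})$. Together with the identity $(\vv{g}\times\vv{N})\times\vv{f} = \vv{N}(\vv{g}\cdot\vv{f}) - \vv{g}(\vv{N}\cdot\vv{f})$ this gives the clean expression
\begin{equation*}
gNf \;=\; \vv{N}\cdot(\vv{g}\times\vv{f}) \;+\; \sum_{\ell=1}^{3} \bigl(\vv{N}\cdot\vv{A}^{(\ell)}\bigr)\,\vv{e}_{\ell}, \qquad \vv{A}^{(\ell)} := (\vv{g}\cdot\vv{f})\,\vv{e}_{\ell} - f_{\ell}\vv{g} - g_{\ell}\vv{f},
\end{equation*}
with $\{\vv{e}_1,\vv{e}_2,\vv{e}_3\}$ identified with $\{\vv{i},\vv{j},\vv{k}\}$. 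Thus the scalar component has coefficient vector $\vv{g}\times\vv{f}$ and the $\ell$-th vector component has coefficient vector $\vv{A}^{(\ell)}$.

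Next I would verify that each of these four vector fields is divergence-free. For the scalar component, $\vv{g}\times\vv{f} = \nabla\psi \times \nabla\phi$, and the standard identity $\nabla\cdot(\vv{a}\times\vv{b}) = \vv{b}\cdot(\nabla\times\vv{a}) - \vv{a}\cdot(\nabla\times\vv{b})$ combined with $\nabla\times\nabla(\cdot)\equiv 0$ forces $\nabla\cdot(\vv{g}\times\vv{f}) = 0$. For the vector components the product rule produces
\begin{equation*}
\nabla\cdot\vv{A}^{(\ell)} \;=\; \partial_{\ell}(\vv{g}\cdot\vv{f}) \;-\; \bigl(f_{\ell}\Delta\psi + \vv{g}\cdot\nabla f_{\ell}\bigr) \;-\; \bigl(g_{\ell}\Delta\phi + \vv{f}\cdot\nabla g_{\ell}\bigr).
\end{equation*}
The two Laplacians vanish by harmonicity of $\psi$ and $\phi$, while equality of mixed partials ($\partial_{\ell} g_j = \partial_{\ell}\partial_j\psi = \partial_j g_{\ell}$ and likewise for $f$) lets me rewrite $\partial_{\ell}(\vv{g}\cdot\vv{f}) = \vv{f}\cdot\nabla g_{\ell} + \vv{g}\cdot\nabla f_{\ell}$, which cancels exactly against the two surviving gradient terms. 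Invoking Lemma~\ref{lemma:potential} componentwise then delivers an explicit 1-form primitive for each of the four scalar 2-forms, which is precisely what is meant by exactness.

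I expect the main obstacle to be the bookkeeping of the double quaternion product: getting the signs right and recognizing the symmetric form of $\vv{A}^{(\ell)}$ so that the Laplacians appear naturally and pair with the Clairaut-style rewriting to cancel. Once that algebraic shape is exposed, the roles of harmonicity of $\phi$ and $\psi$ and the symmetry of mixed partials are transparent, and no further structural input is needed.
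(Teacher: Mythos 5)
Your proof is correct and follows essentially the same route as the paper's: expand the double quaternion product in pair form, read off the coefficient vector in the basis $(dy\wedge dz,\, dz\wedge dx,\, dx\wedge dy)$ for each of the four quaternionic components, and verify divergence-freeness using harmonicity of $\phi,\psi$ together with equality of mixed partials so that Lemma~\ref{lemma:potential} applies. The only cosmetic differences are that you package all three vector components into a single family $\vv{A}^{(\ell)}$ (the paper does $\ell=1$ and says ``similarly''), and you spell out the Clairaut-style cancellation $\partial_\ell(\vv{g}\cdot\vv{f}) = \vv{f}\cdot\nabla g_\ell + \vv{g}\cdot\nabla f_\ell$, which the paper leaves implicit.
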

	\begin{proof}
	Using the identities \eqref{quaternion} and after some algebra, we obtain
	    \[ gn_rf = \left(-\left(\vv{g}\times\vv{n}_{\vv{r}}\right)\cdot\vv{f}, -\left(\vv{g}\cdot\vv{n}_{\vv{r}}\right)\vv{f}+\left(\vv{g}\times\vv{n}_{\vv{r}}\right)\times\vv{f}\right) \]
        The scalar part of $g\, n_r\, f\, dS_{\boldsymbol{r}}$ then becomes
            \[ -\left(\vv{g}\times\vv{n}_{\vv{r}}\right)\cdot\vv{f}\, dS_{\boldsymbol{r}} = -\left(\vv{f}\times\vv{g}\right)\cdot\vv{n}_{\vv{r}}dS_{\vv{r}}\] 
        Invoking the fact that both $\vv{g}$ and $\vv{f}$ are gradients of harmonic functions, we get            \[\nabla\cdot\left(\vv{f}\times\vv{g}\right) = \vv{g}\cdot\left(\nabla\times\nabla\phi\right)-\vv{f}\cdot\left(\nabla\times\nabla\psi\right)  = 0,\] 
        thereby, confirming that the scalar part is an exact form (using Lemma \ref{lemma:potential}). Similarly, the first component of its vector part is given by
            \[\left(n_1\left(\vv{g}\cdot\vv{f}\right)-g_1\left(\vv{f}\cdot\vv{n}_{\vv{r}}\right)-f_1\left(\vv{g}\cdot\vv{n}_{\vv{r}}\right)\right)dS_{\boldsymbol{r}} = \left(\left(\vv{g}\cdot\vv{f}\right)\vv{e}_1-g_1\vv{f}-f_1\vv{g}\right)\cdot\vv{n}_{\vv{r}}dS_{\vv{r}}\] 
        Therefore, this term is also exact since 
            \[\nabla\cdot\left(\left(\vv{g}\cdot\vv{f}\right)\vv{e}_1-g_1\vv{f}-f_1\vv{g}\right) = \frac{\partial}{\partial x}\left(\vv{g}\cdot\vv{f}\right)-\nabla g_1\cdot\vv{f}-g_1\Delta\phi-\nabla f_1\cdot\vv{g}-f_1\Delta\psi = 0.\] 
	\end{proof}
%
    The following corollary applies this Lemma to help verify the divergence-free condition in Lemma \ref{lemma:potential} to convert the double-layer integral into a 1-form integral. 
	\begin{corollary} \label{lemma:Laplace_kl}
       	Let $\alpha$ be a quaternionic differential $2$-form on $D$ given by
		\begin{equation}\label{eq:lm_2form_kl}
			\begin{aligned}
				\alpha = \alpha_0 + \alpha_1\vv{i} + \alpha_2\vv{j} + \alpha_3\vv{k} = \frac{(r'-r)n_{r}}{|r'-r|^3}f(\boldsymbol{r})dS_{\boldsymbol{r}}.  
			\end{aligned}
		\end{equation}
		If the vector $\vv{f}(\vv{r})$ is the gradient of a harmonic function, then $d \alpha_i=0$, $i=0,1,2,3$. 
		\end{corollary}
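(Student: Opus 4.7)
The plan is to recognize the statement as an immediate consequence of Lemma \ref{lemma:harmonic} combined with the basic identity $d\circ d = 0$. The essential step is to identify the correct harmonic potential associated with the Laplace kernel so that Lemma \ref{lemma:harmonic} applies directly.

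First, I would fix the target point $\vv{r}'$ and observe that the Newtonian potential $\psi(\vv{r}) := 1/|\vv{r}'-\vv{r}|$ is harmonic in $\vv{r}$ on $\RR^3 \setminus \{\vv{r}'\}$ (this is just the fact that $G$ is the fundamental solution of Laplace's equation, with the singular point excluded). A direct computation gives
\begin{equation*}
    \nabla_{\vv{r}} \psi(\vv{r}) \;=\; \frac{\vv{r}'-\vv{r}}{|\vv{r}'-\vv{r}|^3}.
\end{equation*}
By hypothesis, $\vv{f} = \nabla \phi$ for some harmonic function $\phi$. Set $\vv{g} := \nabla \psi$, and let $g = (0,\vv{g})$, $f = (0,\vv{f})$ be the associated pure-vector quaternions.

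With these identifications, the 2-form $\alpha$ in \eqref{eq:lm_2form_kl} is precisely the quaternionic 2-form $g\,n_r\,f\,dS_{\vv{r}}$ appearing in Lemma \ref{lemma:harmonic}. That lemma then tells us that each scalar component $\alpha_i$ ($i=0,1,2,3$) is an exact 2-form on $D$: there exist 1-forms $\omega_i$ such that $\alpha_i = d\omega_i$. (The lemma proves exactness for the scalar part and the first vector component; the other two vector components follow by the same calculation up to cyclic relabeling of coordinate indices, which I would mention in passing.)

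Since exact forms are closed, applying $d$ once more yields $d\alpha_i = d(d\omega_i) = 0$ for each $i$, completing the proof. There is really no obstacle here beyond recognizing $\psi = 1/|\vv{r}'-\vv{r}|$ as the correct harmonic function to plug into Lemma \ref{lemma:harmonic}; the only mild subtlety worth flagging is that $\psi$ is harmonic only away from $\vv{r}'$, so the conclusion applies on any patch $D \subset \mathcal{M}$ that does not contain the target $\vv{r}'$, which is precisely the setting of the close-evaluation problem.
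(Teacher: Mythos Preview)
Your proposal is correct and follows essentially the same route as the paper: the paper's proof simply sets $\vv{g}(\vv{r}) = \nabla \frac{1}{|\vv{r}'-\vv{r}|}$ and invokes Lemma~\ref{lemma:harmonic}, which is exactly what you do. Your additional remarks (making the step ``exact $\Rightarrow$ closed'' explicit via $d\circ d = 0$, and flagging that $\psi$ is harmonic only on $\mathbb{R}^3\setminus\{\vv{r}'\}$) are fine elaborations but not departures from the paper's argument.
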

	\begin{proof}
		This result directly follows from Lemma \ref{lemma:harmonic} by setting $\vv{g}(\vv{r}) = \nabla \frac{1}{|\vv{r}' - \vv{r}|}$ and noting that $|r'-r|=|\boldsymbol{r}' - \vv{r}|$.
	\end{proof}
\subsection{Approximation scheme using harmonic polynomials and quaternionic representation}
\label{subsec:q_approx}		
    Our goal is to express the density $\mu$ in terms of some basis functions that allow us to apply Corollary \ref{lemma:Laplace_kl} to convert the DLP  \eqref{eq:LapDLPext} to a 1-form using Lemma \ref{lemma:potential}. From Corollary \ref{lemma:Laplace_kl}, it is clear that the elements of such a basis set essentially must be in quaternionic form and their vector components must be gradients of some harmonic functions (notice that in Corollary \ref{lemma:Laplace_kl} we need to expand the quaternionic form $\frac{(r'-r)n_{r}}{|r'-r|^3}f(\boldsymbol{r})dS_{\boldsymbol{r}}$ before projecting the surface elements).	To construct such a basis set, we turn to {\em harmonic polynomials}, which are homogenous polynomials that satisfy the Laplace equation (we refer the reader to \cite{gallier2009notes} for a review on this topic). 
	
	While there are $2p+1$ independent harmonic polynomials of degree $p$, we will chose $p$ of them for our construction. The only requirement is that their gradients must be linearly independent. We use, for example, the following set of harmonic polynomials for upto degree 7:
	\begin{equation} \label{eq:harmonics}
		\begin{aligned}
			\mathcal{P}_1= & \{z\}, \ \mathcal{P}_2= \{x^2-z^2, \ y^2-z^2\},\ \mathcal{P}_3= \{x^3-3xz^2, \ y^3-3yz^2, \ xyz\},\\
			\mathcal{P}_4= &\{x^4-6x^2z^2+z^4, \ y^4-6y^2z^2+z^4, 3x^2yz-yz^3, \ 3xy^2z-xz^3\},\\
    		\mathcal{P}_5= &\{x^5-10x^3z^2+5xz^4, \ y^5-10y^3z^2+5yz^4, \ x^4y-6x^2yz^2+yz^4,\\
    						& \hspace{0.1in} xy^4-6xy^2z^2+xz^4,\ -15x^2y^2z+5x^2z^3+5y^2z^3-z^5\},\\
    		\mathcal{P}_6= &\{x^6-15x^4z^2+15x^2z^4-z^6, \ y^6-15y^4z^2+15y^2z^4-z^6, \\
							& \hspace{0.1in} x^5y-10x^3yz^2+5xyz^4, \ xy^5-10xy^3z^2+5xyz^4\\
							& \hspace{0.1in} 5x^4yz-10x^2y^3z+y^5z, \ 5xy^4z-10x^3y^2z+x^5z\},\\
    		\mathcal{P}_7= &\{x^7-7xz^6+35x^3z^4-21x^5z^2, \ y^7-7yz^6+35y^3z^4-21y^5z^2, \\
    							& \hspace{0.1in} x^6y-15x^4yz^2+15x^2yz^4-z^6y, \ xy^6-15xy^4z^2+15xy^2z^4-xz^6, \\
    							& \hspace{0.1in} 3x^5y^2-3x^5z^2-30x^3y^2z^2+10x^3z^4+15xy^2z^4-3xz^6,\\
    							& \hspace{0.1in} 3x^2y^5-3y^5z^2-30x^2y^3z^2+15x^2yz^4+10y^3z^4-3yz^6,\\
    							& \hspace{0.1in} (3x^5y-10x^3y^3+3xy^5)z\}.
		\end{aligned}
	\end{equation}
	
    Then, we assign the gradients of each of these harmonic polynomials, expressed in quaternionic form, as the elements of the required basis set. Denoting this set by $\{f^{(k,1)},\cdots, f^{(k,k)}\}, k=1,\cdots,p$, we can easily derive them from \eqref{eq:harmonics} as
    	\begin{equation}
    		\begin{aligned}
    			&\nabla \mathcal{P}_1 = \{f^{(1,1)}= \vv{k}\},  \quad  \nabla \mathcal{P}_2 = \{f^{(2,1)}=x\vv{i}-z\vv{k},\ f^{(2,2)}=y\vv{j}-z\vv{k} \}, \\
    			\nabla \mathcal{P}_3 = \{ f^{(3,1)}&=\left(x^2-z^2\right)\vv{i}-2xz\vv{k},\,\, f^{(3,2)}=\left(y^2-z^2\right)\vv{j}-2yz\vv{k}, \,\, f^{(3,3)}=yz\vv{i}+xz\vv{j}+xy\vv{k} \},\end{aligned}
    	\end{equation}
    and so on. Therefore, in total, there are $p(p+1)/2$ quaternionic functions in this basis set. Moreover, the set $\nabla \mathcal{P}_p$ is composed of homogenous, quaternionic polynomials of degree $(p-1)$. Thereby, a $p^{\text{th}}$ order convergent scheme is obtained when 
    the set $\{\nabla \mathcal{P}_k, \, k = 1, \ldots, p \}$ is used for approximating smooth quaternionic functions. 

    Now, consider a triangular patch $D \subset \mathcal{M}$, as illustrated in Fig.~\ref{fig:geom}(b). On this patch, we use the basis functions $\{ f^{(k,l)}(\boldsymbol{r})\}$ to approximate the density function as 
    	\begin{equation} \label{eq:q_approx}
    		\begin{aligned}
        	\mu(\boldsymbol{r})+0\vv{i}+0\vv{j}+0\vv{k} \approx 
        	\sum_{k=1}^p\sum_{l=1}^k f^{(k,l)}(\boldsymbol{r})\,c^{(k,l)},
        	\end{aligned}
        \end{equation}
    where the unknown coefficients $c^{(k,l)}$ are also quaternions, that is, 
    \[c^{(k,l)} = c^{(k,l)}_0 + c^{(k,l)}_1\vv{i}+c^{(k,l)}_2\vv{j}+c^{(k,l)}_3\vv{k}.\] %
    Therefore, there are $4\cdot \frac{p(p+1)}{2}$ unknowns that need to be determined. They can be obtained, for instance, by applying a standard collocation scheme. For a $p$-node composite tensor product Gauss-Legendre quadrature in Fig.~\ref{fig:geom}(c), there are $(p^2+p)/2$ quadrature nodes on the pre-image of each triangular patch $D$, denoted as $\{\boldsymbol{r}^{(k,l)} | 1\leq k\leq l\leq p\}$. Enforcing \eqref{eq:q_approx} at these quadrature nodes will generate the required $4\cdot \frac{p(p+1)}{2}$ number of equations. Empirically, we found that the resulting square linear systems are invertible in general. 

    \begin{rmk}    
    In our implementation, we perform a change of coordinates in each patch so that  $\boldsymbol{r}^{(1,1)}$ becomes the origin (as illustrated in Fig.~\ref{fig:geom}(c)). In this case, $c^{(1,1)}=\mu(\boldsymbol{r}^{(1,1)})$ is known, and there are only $(p^2+p)/2-1$ quaternionic unknowns (and corresponding equations). \end{rmk}

    We are now ready to substitute the density approximated as in \eqref{eq:q_approx} into the DLP \eqref{eq:LapDLPext}. However, the double-layer kernel needs to be written in quaternionic form to take advantage of Corollary \ref{lemma:harmonic}. Following lemma summarizes the result. 
		\begin{lemma}\label{lemma:LaplaceQuart}
 			Let $r$ and $n_r$ be the source location and the normal on $D$ respectively, in quaternionic form:  
 				\begin{equation}
                         \begin{aligned}
                                 r = 0 + x\vv{i} + y\vv{j} + z\vv{k}, \ n_r = 0 + n_1\vv{i} + n_2\vv{j} + n_3\vv{k}.
                        \end{aligned}
                 \end{equation}
    If the density is approximated as in \eqref{eq:q_approx}, then the scalar part of
           		\begin{equation}\label{eq:lm_dlp_approx}
                        \begin{aligned}
                                -\int_{D} \frac{(r'-r)n_{r}}{4\pi|r'-r|^3}\left(\sum_{k=1}^p\sum_{l=1}^k f^{(k,l)}(\boldsymbol{r})c^{(k,l)}\right)dS_{\boldsymbol{r}},
                        \end{aligned}
                \end{equation}
          	is a $p^\text{th}$ order convergent scheme to the DLP defined on $D$.
		\end{lemma}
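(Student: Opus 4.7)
The plan is to show the lemma in two stages: first, that the scalar part of the integrand in \eqref{eq:lm_dlp_approx} recovers the DLP integrand \eqref{eq:LapDLPext} whenever the quaternionic approximation \eqref{eq:q_approx} is exact; second, that the collocation-based approximation contributes only an $O(h^p)$ perturbation, where $h$ is the diameter of $D$.

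For the first stage I would directly compute the scalar part of the product $(r'-r)\,n_r\,\mu_q$ using the quaternion multiplication rule stated just before Lemma \ref{lemma:harmonic}. Since both $r'-r=(0,\vv{r}'-\vv{r})$ and $n_r=(0,\vv{n}_{\vv{r}})$ are pure-vector quaternions, one has $(r'-r)n_r=\bigl(-(\vv{r}'-\vv{r})\cdot\vv{n}_{\vv{r}},\,(\vv{r}'-\vv{r})\times\vv{n}_{\vv{r}}\bigr)$. Writing an arbitrary quaternion as $q=(q_0,\vv{q})$, one further round of the $(a,\vv b)(c,\vv d)$ rule shows that the scalar part of $(r'-r)n_r\,q$ equals $-(\vv{r}'-\vv{r})\cdot\vv{n}_{\vv{r}}\,q_0-\bigl((\vv{r}'-\vv{r})\times\vv{n}_{\vv{r}}\bigr)\cdot\vv{q}$. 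Plugging in $q=(\mu,\vv{0})$ and multiplying by $-1/(4\pi|\vv{r}'-\vv{r}|^3)$, the expression collapses to $(\vv{r}'-\vv{r})\cdot\vv{n}_{\vv{r}}\,\mu/(4\pi|\vv{r}'-\vv{r}|^3)$, i.e., the exact DLP integrand of \eqref{eq:LapDLPext}. Thus, were $\mu_q\equiv(\mu,\vv{0})$ on $D$, \eqref{eq:lm_dlp_approx} would reproduce $\mathcal D[\mu](\vv{r}')$ restricted to $D$ exactly.

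For the second stage I would apply the same expansion with $q$ replaced by the collocation residual $\mu_q-(\mu,\vv{0})=:(\mu_{q,0}-\mu,\,\vv{\mu}_q)$, obtaining an error integrand proportional to $(\vv{r}'-\vv{r})\cdot\vv{n}_{\vv{r}}\,(\mu_{q,0}-\mu)+\bigl((\vv{r}'-\vv{r})\times\vv{n}_{\vv{r}}\bigr)\cdot\vv{\mu}_q$, divided by $4\pi|\vv{r}'-\vv{r}|^3$. By construction the basis $\{f^{(k,l)}\}_{k=1,\ldots,p,\,l=1,\ldots,k}$ consists of quaternionic polynomials in $\vv{r}$ whose degrees range from $0$ to $p-1$, matched in number to the $p(p+1)/2$ Gauss--Legendre nodes on $D$. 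Assuming the collocation matrix is invertible (as the authors observe empirically), standard polynomial-interpolation estimates then give $\|\mu_q-(\mu,\vv{0})\|_{L^\infty(D)}=O(h^p)$ for smooth $\mu$, so the integrand error is pointwise $O(h^p)$ on $D$.

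The main obstacle is making this last step rigorous, and it has two subtleties. First, one must verify that the chosen quaternionic gradients of harmonic polynomials (only $p$ at each degree, not the full $2p+1$) span, in the scalar and vector components after quaternion multiplication, a space containing all bivariate polynomials of total degree $\le p-1$ once pulled back to $D$; otherwise the claimed $p$-th order is lost. Second, translating the pointwise $O(h^p)$ bound on the integrand into a uniform $O(h^p)$ bound on the scalar part of \eqref{eq:lm_dlp_approx} requires controlling the near-singular factor $|\vv{r}'-\vv{r}|^{-3}$ as $\vv{r}'$ approaches $D$. The natural way to handle this (and the whole point of the construction) is to invoke Corollary \ref{lemma:Laplace_kl} together with Lemma \ref{lemma:potential}, which converts \eqref{eq:lm_dlp_approx} into a non-singular line integral on $\partial D$; the resulting quadrature is bounded uniformly whenever $\vv{r}'$ stays away from $\partial D$, and the $O(h^p)$ approximation of $\mu$ propagates linearly through it.
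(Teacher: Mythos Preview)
Your approach is correct and mirrors the paper's own proof: both compute the scalar part of the quaternion product $(r'-r)\,n_r\,q$ and observe that when $q=(\mu,\vv{0})$ it collapses to the DLP integrand (since the scalar part of $(r'-r)n_r$ is $-(\vv{r}'-\vv{r})\cdot\vv{n}_{\vv{r}}$), with $p$th-order accuracy inherited from the degree-$(p{-}1)$ polynomial basis. Your treatment is considerably more thorough than the paper's two-sentence sketch, and the subtleties you flag---the spanning property of the chosen harmonic gradients and uniform control of the near-singular factor as $\vv{r}'\to D$---are genuine rigor gaps that the paper's proof does not address.
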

		\begin{proof}
			The proof follows from the following two observations: (i) owing to \eqref{eq:q_approx}, only the scalar part of $\sum_{k=1}^p\sum_{l=1}^k f^{(k,l)}c^{(k,l)}$ remains and (ii) the scalar part of quaternion product $rn_r$ is given by $-\boldsymbol{r}\cdot \boldsymbol{n}_{\boldsymbol{r}}$. 
		\end{proof}
\begin{figure}
        \centering
        \includegraphics[width=\linewidth]{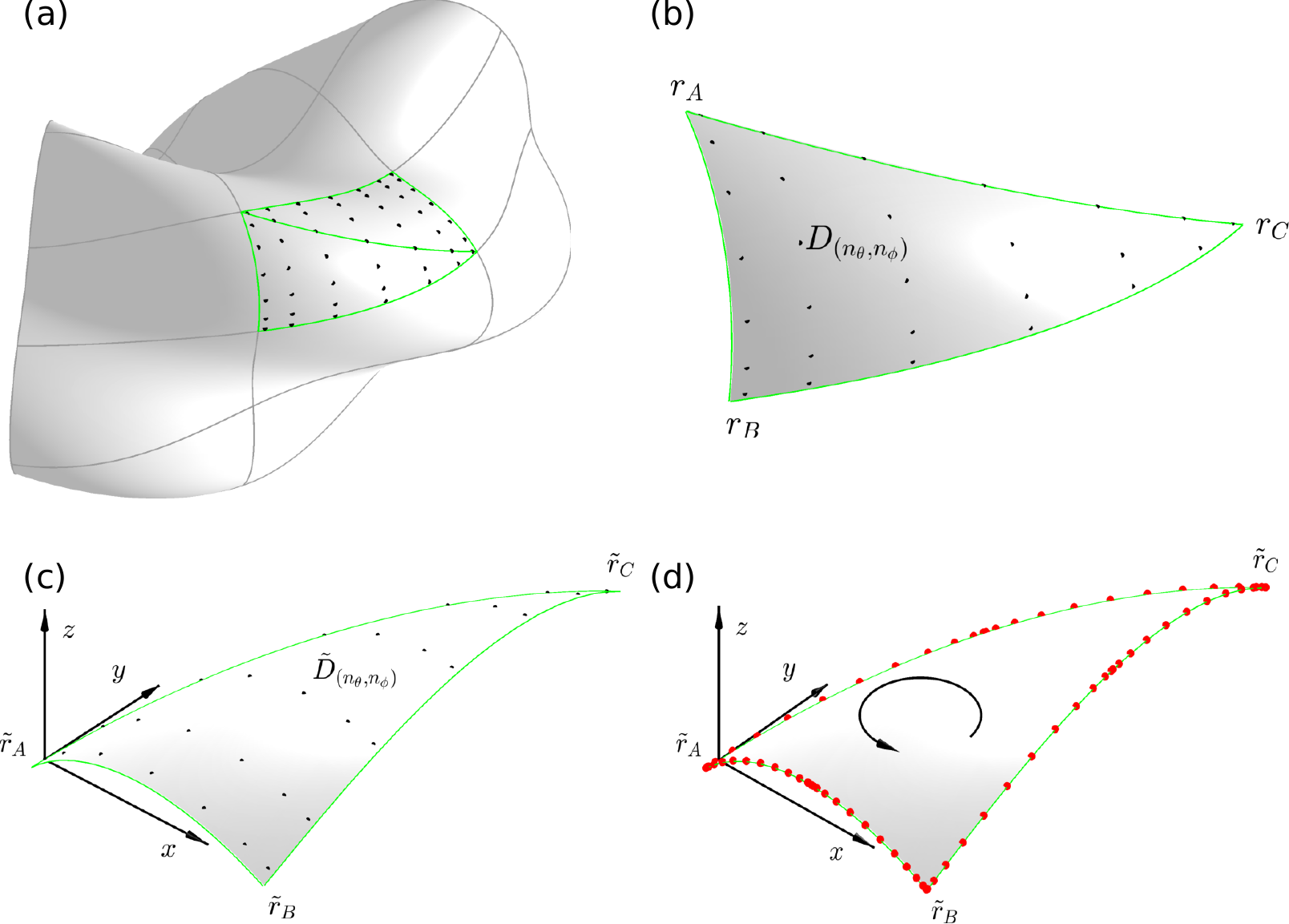}
         \caption{Schematic of our proposed product integration scheme for Laplace double-layer potential. (a) is part of a parametrized ``cruller'' surface. (b) is one of the triangular sub panel, denoted by $D_{(n_{\theta},n_{\phi})}$, from the rectangular panel in (a). (c) is the transformed triangular patch $\tilde{D}_{(n_{\theta},n_{\phi})}$. (d) shows the quadrature nodes (red) on integration contour $\partial \tilde{D}_{(n_{\theta},n_{\phi})}$, the boundary of the transformed triangular patch.%
    }\label{fig:geom}
\end{figure}

    Finally, to construct basis functions upto an arbitrary order beyond $p=7$, we can exploit the fact that restricting harmonic polynomials to a unit sphere yields the standard spherical harmonics \cite{gallier2009notes}. Similar to \eqref{eq:harmonics}, we can chose $p$ of the $2p+1$ solid spherical harmonics of degree $p$. For example, let $(\rho, \theta, \phi)$ be the coordinates of a point in spherical coordinates, then, we can set 
	\begin{equation}\label{eq:q_approx_higher}
		\begin{aligned}
			\mathcal{P}_p = \{\rho^p\cos(k\phi) P^k_p(\cos\theta), \, k=1,\cdots,p\},
		\end{aligned}
	\end{equation} 
	where $P^k_p(\cos\theta) = (-1)^k\sin^k\theta \frac{d^k}{d\left(\cos\theta\right)^k}\left(P_p(\cos\theta)\right)$ are the associated Legendre polynomials. When these functions are expressed in terms of the Cartesian coordinates, $x = \rho \sin{\theta}\cos{\phi}, y = \rho \sin{\theta}\sin{\phi}$ and $z = \rho \cos{\theta}$, we get a set of homogeneous harmonic polynomials of degree $p$. For example, consider the case of $p=8$, $k=2$, we get the following after using basic trigonometric identities:
	\begin{equation}\label{eq:q_approx_8th}
		\begin{aligned}
		\rho^8\cos(2\phi)P_8^2(\cos\theta) =\frac{315}{16}\rho^8\cos(2\phi)\sin^2\theta\left(143\cos^6\theta-143\cos^4\theta+33\cos^2\theta-1\right)&\\ 
		= \frac{315}{16}\left(x^2-y^2\right)
		\left(143z^6-143z^4(x^2+y^2+z^2)+  33z^2(x^2+y^2+z^2)^2- \right.& \left.(x^2+y^2+z^2)^3\right).
	    \end{aligned}
	\end{equation}
	The convergence results of the approximation using this basis upto order 10 are shown in Fig.~\ref{fig:numer_exp1}. 

\section{Numerical scheme}
\label{NumerScheme}
We now have all the tools necessary to build a high-order accurate close evaluation scheme. Here, we will restrict our discussion to toroidal geometries parameterized by an infinitely-differentiable, doubly-periodic function $\boldsymbol{r}(\theta,\phi)=\left(x(\theta,\phi), y(\theta,\phi), z(\theta,\phi)\right)$, where $(\theta,\phi) \in [0,2\pi)^2$, with the understanding that the scheme can be generalized to other topologies and problem settings (since it inherently works at the level of local patches).  Such a surface can be covered by the images of disjoint union of uniform rectangular patches in parameter space as shown in Fig.~\ref{fig:geom}(a). When target $\boldsymbol{r}'$ is far from $\mathcal{M}$, we simply use the standard Nystr\"om discretization for evaluation of \eqref{eq:introLapDLP} based on a composite tensor-product Gauss-Legendre quadrature (e.g., see \cite{barnett2019high}).

For both weakly-singular and nearly-singular integrals, our numerical scheme essentially remains the same. Therefore, from here on, we do not distinguish whether the target is on or off the surface. A high-level schematic description of the scheme is given in Fig.~\ref{fig:geom}(b-d). In a patch $D$ where the double-layer potential is singular, we reduce the 2-form integration to 1-form integration on $\partial D$. The steps involved in this conversion are described next.

\subsection{Close evaluation scheme for Laplace double-layer potentials} \label{sc:LapDeval}
	The computation proceeds in two stages. In the first stage, which is {\em target-independent}, the density function is expressed in terms of the quaternionic basis functions $\{ \nabla \mathcal{P}_k\}$. In the second stage, which is {\em target-dependent}, a sequence of steps are outlined that accomplish 2-form to 1-form conversion of the DLP. 

	\paragraph{Stage 1: Precomputation} Given the spatial discretization parameters, $n_{\theta}$ and $n_{\phi}$, we cover $\mathcal{M}$ with $n_{\theta}$-by-$n_{\phi}$ rectangular patches, with a composite $p$-by-$p$ Gauss-Legendre quadrature on each patch as shown in Fig.~\ref{fig:geom}(a). On a standard patch, we further divide it into two triangular patches as shown in Fig.~\ref{fig:geom}(a). For each triangular patch $D$, we first identify the set of all close targets. A coordinate transform is applied on each such target $\boldsymbol{r}'$ and the sources in $D$ such that  $\boldsymbol{r}^{(1,1)}$ becomes the origin (Fig.~\ref{fig:geom}(c)). Points after transformation are denoted as $\tilde{\boldsymbol{r}}'$, and $\tilde{\boldsymbol{r}}^{(k,l)}$, $1\leq l\leq k\leq p$. 
		
	Enforcing \eqref{eq:q_approx} in the transformed coordinate system at the rest of the quadrature nodes, we obtain the following system of vector equations: 
	\begin{equation}\label{eq:quaternion_algo}
		\begin{aligned}
			\sum_{k=2}^{p}\sum_{l=1}^{k}A[f^{(k,l)}](\tilde{\boldsymbol{r}}^{(i,j)})C^{(k,l)} = U^{(i,j)},\quad 1\leq j\leq i, \, 1<i\leq p,
		\end{aligned}
	\end{equation}
	where the operator $A[\cdot]$ acting on a quaternionic function $f$, the unknown coefficient vector $C$ and the right-hand-side vector $U$ are given by
    \begin{equation}\label{eq:quaternion_xmatrix}
         	A[f] = 
				\begin{pmatrix}
					0 & -f_1 & -f_2 & -f_3 \\
					f_1 & 0 & -f_3 & f_2\\
					f_2 & f_3 & 0 & -f_1\\
					f_3 & -f_2 & f_1 & 0
					\end{pmatrix},\, C^{(k,l)} = \begin{pmatrix} c_0\\ c_1\\ c_2\\ c_3
                \end{pmatrix}^{(k,l)},
    \,\text{and}\quad U^{(i,j)} = \begin{pmatrix} \mu(\boldsymbol{r}^{(i,j)})-\mu(\boldsymbol{r}^{(1,1)}) \\ 0 \\0 \\0 \end{pmatrix}.
    \end{equation}
    Assembling the vector equations \eqref{eq:quaternion_algo} for all $(i,j)$ yields a square linear system of size $4\cdot\left(\frac{p(p+1)}{2} - 1\right)$, for which we simply apply a direct solver. Therefore, this stage incurs a computational cost of $\mathcal{O}(p^6)$ per triangular patch $D$.   

	\paragraph{Stage 2: 2-to-1 form conversion and contour integration} Once the quaternionic coefficients $c^{(k,l)}$ for approximating the density are found in Stage 1, the next step is to substitute them in \eqref{eq:lm_dlp_approx} and proceed with converting this 2-form to quaternionic differential $1$-form $\omega^{(k,l)}$ using Lemma \ref{lemma:potential}. After the coordinate transformation, we can rewrite \eqref{eq:lm_dlp_approx} as (we omit \textasciitilde{} on $\tilde{\vv{r}}$):
	\begin{equation}
	    \sum_{k=2}^p\sum_{l=1}^k \left(\frac{1}{4\pi}\int_{\tilde{D}} \alpha^{(k,l)}\right)c^{(k,l)} + \frac{1}{4\pi}\int_{\tilde{D}}\alpha^{(1,1)}\mu(\vv{r}^{(1,1)}), \quad\text{where} \quad \alpha^{(k,l)} = - \frac{(r'-r)n_{r}}{|r'-r|^3}\, f^{(k,l)} dS_{\boldsymbol{r}}.
	\end{equation}
    The quaternionic components of $\alpha^{(k,l)}$ can be simplified to the following after carrying out the product of quaternions on the right hand side:
	\begin{equation}
		\begin{aligned}
			\alpha^{(k,l)}_i =  \frac{1}{|\boldsymbol{r}'-\boldsymbol{r}|^3} \boldsymbol{q}^{(k,l)}_{i} \cdot \boldsymbol{n}_{\boldsymbol{r}} \, dS_{\boldsymbol{r}}, \ k=2,\cdots, p, \ 1\leq l \leq k,
		\end{aligned}
	\label{eq:alpha_i}\end{equation}
    where $i=0,1,2,3$ corresponds is the index of the quaternion and each of the vectors $\boldsymbol{q}^{(k,l)}_{i}$ are $k^{\text{th}}$ degree polynomials in $\boldsymbol{r}$, which can be derived using quaternion product as
	\begin{equation}
		\begin{aligned}
		    &\vv{q}^{(k,l)}_0(\vv{r}',\vv{r}) = \vv{f}^{(k,l)}(\vv{r})\times \left(\vv{r}'-\vv{r}\right),\\ 
		    &\vv{q}^{(k,l)}_1(\vv{r}',\vv{r}) = -\left(\left(\vv{r}'-\vv{r}\right)\cdot\vv{f}^{(k,l)}(\vv{r})\right)\vv{e}_1 + \left(x'-x\right)\vv{f}^{(k,l)}(\vv{r}) + f^{(k,l)}_1(\vv{r})\left(\vv{r}'-\vv{r}\right),\\
		    &\vv{q}^{(k,l)}_2(\vv{r}',\vv{r}) = -\left(\left(\vv{r}'-\vv{r}\right)\cdot\vv{f}^{(k,l)}(\vv{r})\right)\vv{e}_2 + \left(y'-y\right)\vv{f}^{(k,l)}(\vv{r}) + f^{(k,l)}_2(\vv{r})\left(\vv{r}'-\vv{r}\right),\\
		    &\vv{q}^{(k,l)}_3(\vv{r}',\vv{r}) = -\left(\left(\vv{r}'-\vv{r}\right)\cdot\vv{f}^{(k,l)}(\vv{r})\right)\vv{e}_3 + \left(z'-z\right)\vv{f}^{(k,l)}(\vv{r}) + f^{(k,l)}_3(\vv{r})\left(\vv{r}'-\vv{r}\right).
		\end{aligned}
	\label{eq:qi} \end{equation}
				
    The advantage of our density approximation scheme is now apparent: from \eqref{eq:alpha_i}, it is clear that $\alpha_i^{(k,l)}$ is an exact form by construction and  we can apply Lemma \ref{lemma:potential} to convert it into a 1-form $\omega^{(k,l)}_i$ as follows:  
	\begin{equation} \label{eq:potential_coeff}
		\begin{aligned}
			\omega^{(k,l)}_i = &\left(\int_0^1 \frac{tzq^{(k,l)}_{i,2}(\boldsymbol{r}',t\boldsymbol{r})- tyq^{(k,l)}_{i,3}(\boldsymbol{r}',t\boldsymbol{r})}{|t\vv{r}-\vv{r}'|^3} dt \right) dx + \left(\int_0^1 \frac{txq^{(k,l)}_{i,3}(\boldsymbol{r}',t\boldsymbol{r})- tzq^{(k,l)}_{i,1}(\boldsymbol{r}',t\boldsymbol{r})}{|t\vv{r}-\vv{r}'|^3} dt \right) dy \\
			&+ \left(\int_0^1 \frac{tyq^{(k,l)}_{i,1}(\boldsymbol{r}',t\boldsymbol{r})- txq^{(k,l)}_{i,2}(\boldsymbol{r}',t\boldsymbol{r})}{|t\vv{r}-\vv{r}'|^3} dt \right) dz
		\end{aligned}
	\end{equation} 	
	Notice that the numerators of each of the integrands in the 1-form above are polynomials of degree $(k+1)$ in the variable $t$. Defining 
	$M_k(\boldsymbol{r}',\boldsymbol{r})= \int_0^1 \frac{t^k}{|t\vv{r}-\vv{r}' |^{3}}dt$ and using \eqref{eq:qi}, we can separate out the terms that depend on $t$ and rewrite $\omega^{(k,l)}_i$ as
	\begin{equation} \label{eq:1-form}
		\begin{aligned}
			\omega^{(k,l)}_i = &\left(v^{(k,l)}_{i,1}(\boldsymbol{r})M_{k+1}+w^{(k,l)}_{i,1}(\boldsymbol{r}',\boldsymbol{r})M_{k}\right) dx  + \left(v^{(k,l)}_{i,2}(\boldsymbol{r})M_{k+1}+w^{(k,l)}_{i,2}(\boldsymbol{r}',\boldsymbol{r})M_{k}\right) dy \\
				& + \left(v^{(k,l)}_{i,3}(\boldsymbol{r})M_{k+1}+w^{(k,l)}_{i,3}(\boldsymbol{r}',\boldsymbol{r})M_{k}\right) dz,
		\end{aligned}
	\end{equation}
	where, for brevity, we omitted $(\boldsymbol{r}',\boldsymbol{r})$ dependency on $M_k$. Here, $v^{(k,l)}_{i,j}(\boldsymbol{r})$ are $(k+1)^{\text{th}}$ degree polynomials in $\boldsymbol{r}$ and $w^{(k,l)}_{i,j}(\boldsymbol{r}',\boldsymbol{r})$ are $k^{\text{th}}$ degree polynomials in $\boldsymbol{r}$ and linear in $\vv{r}'$. The explicit expressions for these can be easily derived from \eqref{eq:qi} and \eqref{eq:potential_coeff}; for example, in the case of $i=0$, we have 
	\begin{equation}
	    \begin{aligned}
	        &v^{(k,l)}_{0,1}(\vv{r}) = \left(y^2+z^2\right)f^{(k,l)}_1(\vv{r})-xyf^{(k,l)}_2(\vv{r})-xzf^{(k,l)}_3(\vv{r})\\
	        &w^{(k,l)}_{0,1}(\vv{r}',\vv{r}) = -\left(y'y+z'z\right)f^{(k,l)}_1(\vv{r})+x'yf^{(k,l)}_2(\vv{r})+x'zf^{(k,l)}_3(\vv{r})\\
	        &v^{(k,l)}_{0,2}(\vv{r}) = -xyf^{(k,l)}_1(\vv{r})+\left(x^2+z^2\right)f^{(k,l)}_2(\vv{r})-yzf^{(k,l)}_3(\vv{r})\\
	        &w^{(k,l)}_{0,2}(\vv{r}',\vv{r}) = y'xf^{(k,l)}_1(\vv{r})-\left(x'x+z'z\right)f^{(k,l)}_2(\vv{r})-y'zf^{(k,l)}_3(\vv{r})\\
	        &v^{(k,l)}_{0,3}(\vv{r}) = -xzf^{(k,l)}_1(\vv{r})-yzf^{(k,l)}_2(\vv{r})+\left(x^2+y^2\right)f^{(k,l)}_3(\vv{r})\\
	        &w^{(k,l)}_{0,3}(\vv{r}',\vv{r}) = z'xf^{(k,l)}_1(\vv{r})+z'yf^{(k,l)}_2(\vv{r})-\left(x'x+y'y\right)f^{(k,l)}_3(\vv{r})
	    \end{aligned}
	\end{equation}
	Also see Appendix \ref{appx:dlp_2nd_order} where we derive explicit formulas for all the quantities $\omega^{(k,l)}_i$, $v^{(k,l)}_{i,j}$ and $w^{(k,l)}_{i,j}$ in the case of a second-order scheme. 
	
	Therefore, the kernel singularity in the DLP is now encoded into the moments $\{M_k\}$. For each source $\boldsymbol{r}$ and target $\boldsymbol{r}'$, we evaluate these moments analytically via recurrences given in Appendix~\ref{appx:LMN}. Now, we can express the complete $1$-form $\omega$ as a linear combination of each individual $1$-forms with coefficients $C^{(k,l)}$ and the constant term corresponding to $k=1$ as 
	\begin{equation}
		\begin{aligned}
			\omega = \mu(\vv{r}^{(1,1)})\omega_0^{(1,1)} + \sum_{k=2}^p\sum_{l=1}^k\Omega^{(k,l)}C^{(k,l)}, 
		\end{aligned}
	\end{equation}
	where $\Omega^{(k,l)}=[\omega_0^{(k,l)},\omega_1^{(k,l)},\omega_2^{(k,l)},\omega_3^{(k,l)}]$. Then, we evaluate boundary path integral $\int_{\partial \tilde{D}} \omega$ using a high-order smooth quadrature rule (Gauss-Legendre). However, note that the 1-forms \eqref{eq:1-form} are still singular if $\vv{r}'$ approaches $\vv{r}$ (as can be inferred from the base conditions \eqref{eq:base}). But since the sources now reside on $\partial D$, this situation can be avoided altogether by simply choosing a larger patch.    
	
	The computational complexity of this stage is primarily dictated by the evaluation of the moments $\{M_k\}$. For each target, evaluating the recurrences for these moments take $\mathcal{O}(p)$ time for each of the $\mathcal{O}(p)$ sources on $\partial D$, bringing the total per-target cost to $\mathcal{O}(p^2)$. Therefore, the computational cost of both stages of the close evaluation per patch $D$ is $\mathcal{O}(p^6 + n_{targ} p^2)$, where $n_{targ}$ is the number of targets that are considered to be close to $D$.

\subsection{A generalization to evaluate the single-layer and the gradient of double-layer Laplace potentials}
\label{subsec:generalization}
    In several applications, one needs to evaluate high-order derivatives of layer potentials either as a post-processing step or, in some cases, as part of representing the solution itself. Lemma \ref{lemma:harmonic} simplifies this task for us since we can exploit the fact that the kernels themselves are harmonic functions. For example, assuming the density is approximated as in \eqref{eq:q_approx}, it is easy to show that the scalar part of the integral
	\begin{equation}\label{eq:graddlp}
		\begin{aligned}
          -\int_{\mathcal{M}}\frac{1}{|r'-r|^3}
           &\left(\left(\delta_{1j}-3\frac{(x'-x)(r'-r) }{|r'-r|^2}\right)n_{r},\
            \left(\delta_{2j}-3\frac{(y'-y)(r'-r)}{|r'-r|^2}\right)n_{r},\right.\\
            &\left.\hspace{0.5in} \left(\delta_{3j}-3\frac{(z'-z)(r'-r)}{|r'-r|^2}\right)n_{r}\right)\left(\sum_{k=1}^p\sum_{l=1}^k f^{(k,l)}(\boldsymbol{r})c^{(k,l)}\right)dS_{\boldsymbol{r}},
		\end{aligned}
	\end{equation}
    is an approximation to $\nabla\mathcal{D}(\mu)(\boldsymbol{r})$. The requirements $d\alpha=0$ for converting the directional derivatives of the double-layer potential to $1$-form $\omega$ are satisfied based on the interchangeability of partial derivatives and exterior derivatives. However, constructing the 1-forms in this case also requires us to evaluate integrals of the form $L_k(\boldsymbol{r}',\boldsymbol{r})= \int_0^1 \frac{t^k}{|t\vv{r}-\vv{r}' |^{5}}dt$; recurrences to evaluate these are provided in Appendix~\ref{appx:LMN}. 

 	Lastly, we consider the single-layer potential (SLP), given by $\mathcal{S}[\mu](\boldsymbol{r}') = \int_{\mathcal{M}} \frac{1}{|\vv{r}' - \vv{r}|} \mu(\boldsymbol{r})\ dS_{\boldsymbol{r}}$. Since the SLP does not have a $2$-form structure like the DLP as in \eqref{eq:omega_approx}, we instead write it as 
	\begin{equation} \label{eq:modifiedSLP}
			\begin{aligned}
				\mathcal{S}[\mu](\boldsymbol{r}')   = \int_{\mathcal{M}} \frac{\left( r'-r\right) n_{r}}{|r'-r|^3} \overline{n}_{r} \overline{\left(r'-r\right)} \mu(\boldsymbol{r}) \ dS_{\boldsymbol{r}},
			\end{aligned}
	\end{equation}
	using quaternion algebra. Now, we can treat it the same way as a DLP but with a modified density function. That is, we construct the following approximation,
	\begin{equation} \label{eq:q_approxSLP}
		\begin{aligned}
			\overline{n}_{r} \overline{\left(r'-r\right)} \mu(\boldsymbol{r}) \approx 
			\sum_{k=1}^p\sum_{l=1}^k f^{(k,l)}(\boldsymbol{r})\,c^{(k,l)},
		\end{aligned}
	\end{equation}
    substitute in \eqref{eq:modifiedSLP} and follow the close evaluation scheme as in Section \ref{sc:LapDeval}. While it appears like this approach requires us to perform the approximation \eqref{eq:q_approxSLP} independently for each target, since the term $\overline{\left(r'-r\right)}$ is separable, we can just form it once per patch with minor bookkeeping. 

\section{Numerical results and discussion}
\label{NumExp} 
In this section, we present numerical results from a series of tests to validate the accuracy of our close evaluation scheme. We consider three different geometries of toroidal and spherical topologies. First, we conduct a convergence test of the quaternionic approximation algorithm presented in Section~\ref{subsec:q_approx}. We then test the performance of our DLP close evaluation, first {\em via} self-convergence on three different geometries, followed by a boundary value problem solve, whose exact solution is known analytically. 

	\subsection{Convergence properties of the quaternionic approximation}
	\label{subsec:numer_exp1}
	Consider a local patch $D$ and a smooth function $\mu$ defined on it. We can express the coordinate functions of $D$ and $\mu$ in terms of the tensor-product monic polynomials:
	\begin{equation}
		\begin{aligned}
			\boldsymbol{r} \approx [x,y,z]^T = [x,y,\sum_{1\leq k+l\leq m} a^{D}_{k,l}x^ky^l]^T,\quad \mu(\boldsymbol{r}) \approx \sum_{1\leq k'+l'\leq m} a^{\mu}_{k',l'}x^{k'}y^{l'},
		\end{aligned}
	\label{taylor}\end{equation} 
	where $a^{D}_{k,l}$ and $a^{\mu}_{k',l'}$ are the Taylor coefficients of the $z-$component and $\mu$ respectively, in terms of $x$ and $y$. 

	In this test, we provide empirical evidence that the quaternionic  approximation scheme employed in \eqref{eq:q_approx} is $p^{\text{th}}$ order convergent. To do so, we proceed as follows. In the domain $(x,y) \in [-0.5, 0.5]^2$, we generate a sequence of the function pairs $(z, \mu)$ by setting $a^{D}_{k,l} = a^{\mu}_{k',l'} = 1$ and the rest to zero for each admissible pairs of indices (i.e., such that $1\leq k+l\leq m$ and $1\leq k'+l'\leq m$). Two such pairs are plotted in Fig. \ref{fig:numer_exp1} (left). 
	 Then, on each of these pairs, we construct the density approximation in quaternionic form \eqref{eq:q_approx} {\em via} collocation by solving \eqref{eq:quaternion_algo}. We report the relative $L^{\infty}$ error between $\mu(\vv{r})$ and $\sum_{k=1}^p\sum_{l=1}^k f^{(k,l)}(\vv{r})\,c^{(k,l)}$ measured on a $250\times 250$ target grid for four different $h_D$, leg size of right triangular patches, in Fig. \ref{fig:numer_exp1} (middle). We observe that $9$-digit accuracy is reached as we reduce $h_D$. Moreover, from Fig. \ref{fig:numer_exp1} (right), we can see that the expected order of convergence is attained asymptotically. 
		
	\begin{figure}[h!]
        \centering
        \includegraphics[height=0.28\textwidth]{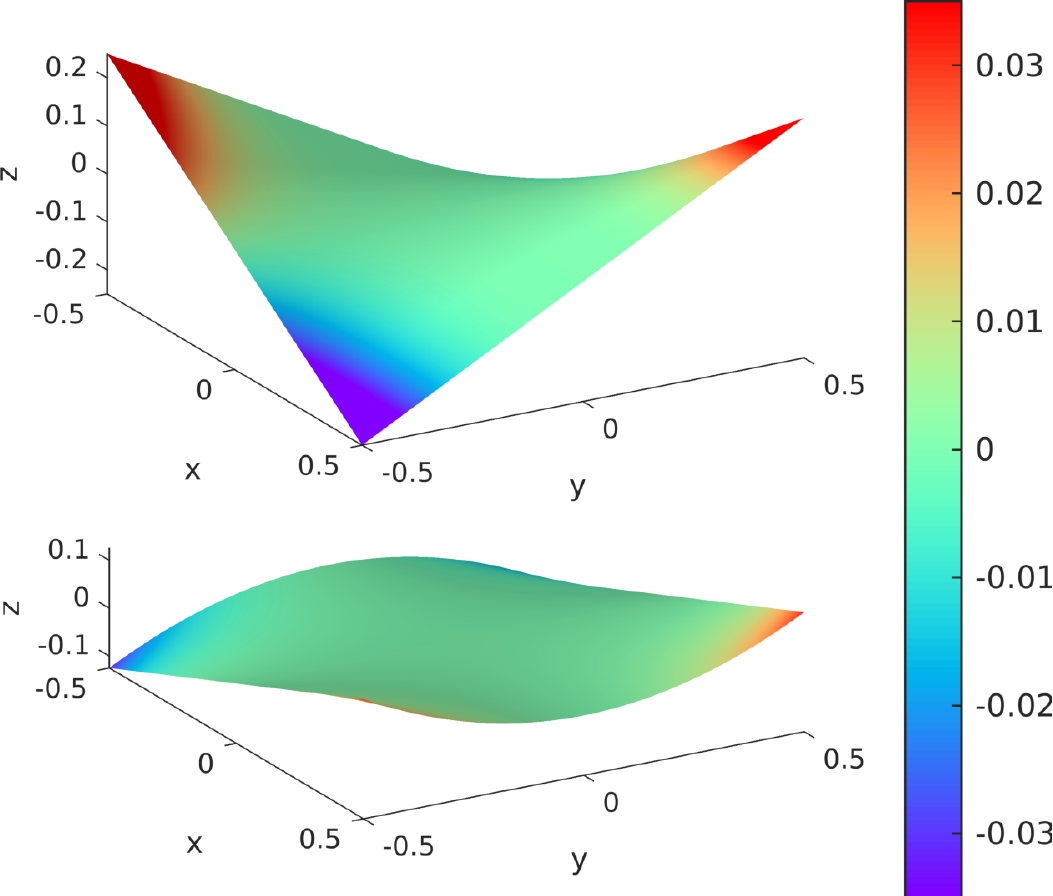}
        \includegraphics[height=0.28\textwidth]{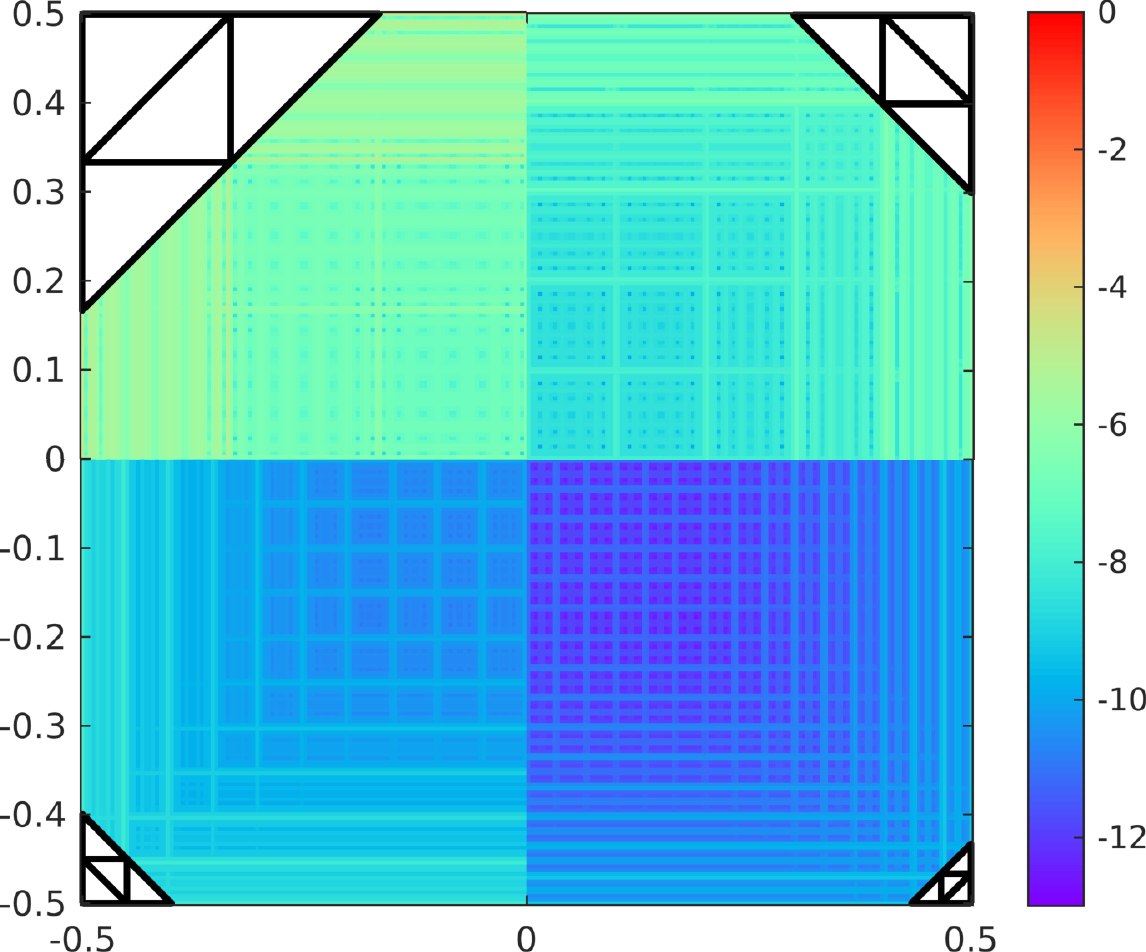}
        \includegraphics[height=0.28\textwidth]{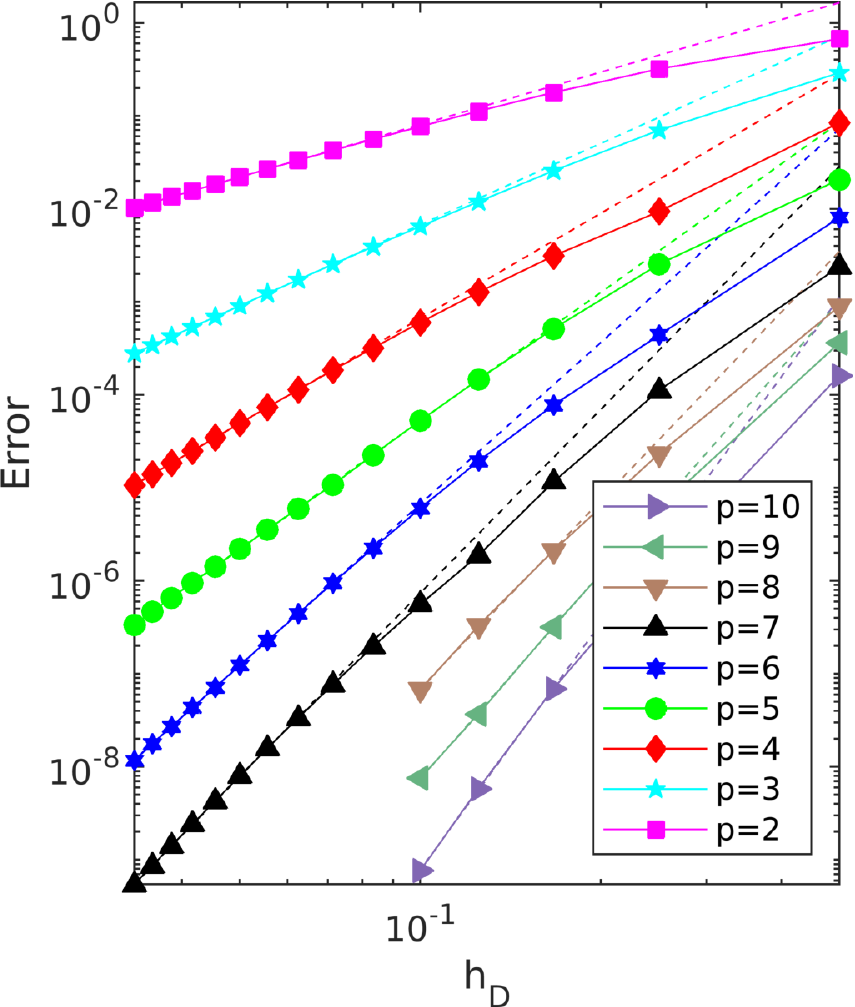}
        \caption{ {\em Left:} Two examples of the function pairs $(z, \mu)$ constructed using the procedure outlined in Sec \ref{subsec:numer_exp1}. In each case, the $z-$component is plotted with color scaled by the density $\mu$. {\em Middle:} The density approximation scheme \eqref{eq:q_approx} is applied to each of these function pairs and the max of the relative $L^{\infty}$ error is plotted here. The domain in $(x,y)$ is subdivided in the manner shown here, $h_D=1/6,\, 1/10,\, 1/20,\, 1/30$. The four skeletons on the corners illustrate refinements of the triangular grids at four corresponding quadrants. {\em Right:} Convergence plot of approximation error using $p=2,\cdots,7$. Dashed lines of corresponding colors are plots of expected error. For $p>7$, the generalized higher-order approximation scheme in  \eqref{eq:q_approx_higher} was employed.}\label{fig:numer_exp1}
    \end{figure}
	
\subsection{Laplace DLP evaluation test} 
	  Here, we consider the double-layer solution \emph{ansatz} \eqref{eq:introLapDLP} for the exterior Laplace equation. We begin with a cruller surface from Ref.  \cite{barnett2019high} which is parametrized by an infinitely differentiable, double $2\pi$-periodic function $\boldsymbol{r}(\theta,\phi)\,:\,[0,2\pi)^2\rightarrow\mathbb{R}^3$, followed by a cushion surface from Ref. \cite{perez2019harmonic}, which has a global parametrization in spherical coordinate $\boldsymbol{r}(\theta,\phi)\,:\,[-\pi/2,\pi/2]\times[0,2\pi)\rightarrow\mathbb{R}^3$. Lastly, to demonstrate the applicability of our approach to arbitrary parameterizations, we consider an input mesh without any analytic description of the geometry. 
	  
	  \emph{Example 1: ``Cruller" geometry.}
	    This example demonstrates the handling of complex geometries parameterized by doubly $2\pi$-periodic functions. We consider a smooth, warped torus surface $\mathcal{M}$ in Cartesian coordinates, given by
		\begin{equation}\label{eq:curller}
			\begin{aligned}
				\boldsymbol{r} = \boldsymbol{r}(\theta,\phi) = \left( \left(a+f\left(\theta,\phi\right)\cos\left(\theta\right)\right)\cos\left(\phi\right),\ \left(a+f\left(\theta,\phi\right)\cos\left(\theta\right)\right)\sin\left(\phi\right),\  f\left(\theta,\phi\right)\sin\left(\theta\right) \right)
			\end{aligned}
		\end{equation}
		where $f(\theta,\phi)=b+w_c\cos(w_n\phi+w_m\theta)$, $a=1$ and $b=1/2$. We test the close evaluation scheme using mean curvature $H$ as density:
		\begin{equation}
			\begin{aligned}
				\mu(\theta,\phi) = H(\theta, \phi) = \frac{1}{2} \left(E N -2FM+GL\right)/ \left(EG-F^2\right)
			\end{aligned}
		\label{eq:curvature}\end{equation}
		where the fundamental forms are given by $E = \boldsymbol{r}_{\phi}\cdot \boldsymbol{r}_{\phi}$, $F = \boldsymbol{r}_{\phi}\cdot \boldsymbol{r}_{\theta}$, $G = \boldsymbol{r}_{\theta}\cdot \boldsymbol{r}_{\theta}$, $L = \boldsymbol{r}_{\phi\phi}\cdot \boldsymbol{n}$, $M = \boldsymbol{r}_{\phi\theta}\cdot \boldsymbol{n}$, $N = \boldsymbol{r}_{\theta\theta}\cdot \boldsymbol{n}$, and the normal $\boldsymbol{n} = \left(\boldsymbol{r}_{\phi}\times \boldsymbol{r}_{\theta}\right)/|\boldsymbol{r}_{\phi}\times \boldsymbol{r}_{\theta}|$.
		
		We discretize $\mathcal{M}$ uniformly with $n_{\theta}$-by-$n_{\phi}$ rectangular patches and conduct a self-convergence study with reference values computed using the close evaluation scheme with $n_{\theta} = 108, n_{\phi} = 144$. The results are shown in Table \ref{tab:torus_eval_error} and Fig. \ref{fig:torus_eval}. The errors are measured at targets located on the $\phi = \pi/2$ plane (as shown in Fig. \ref{fig:torus_eval}, left). We note that the expected order of convergence is observed asymptotically.
		
		\begin{figure}[h!]
            \centering
            \includegraphics[height=0.28\textwidth]{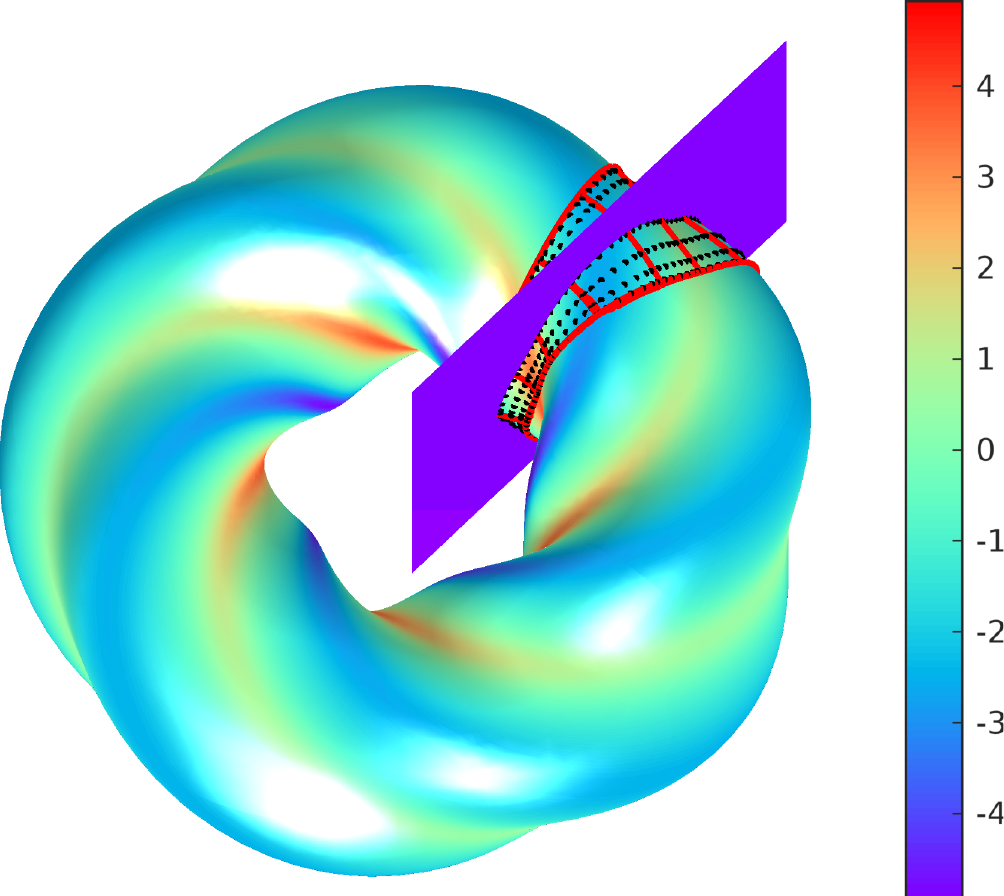}
            \includegraphics[height=0.28\textwidth]{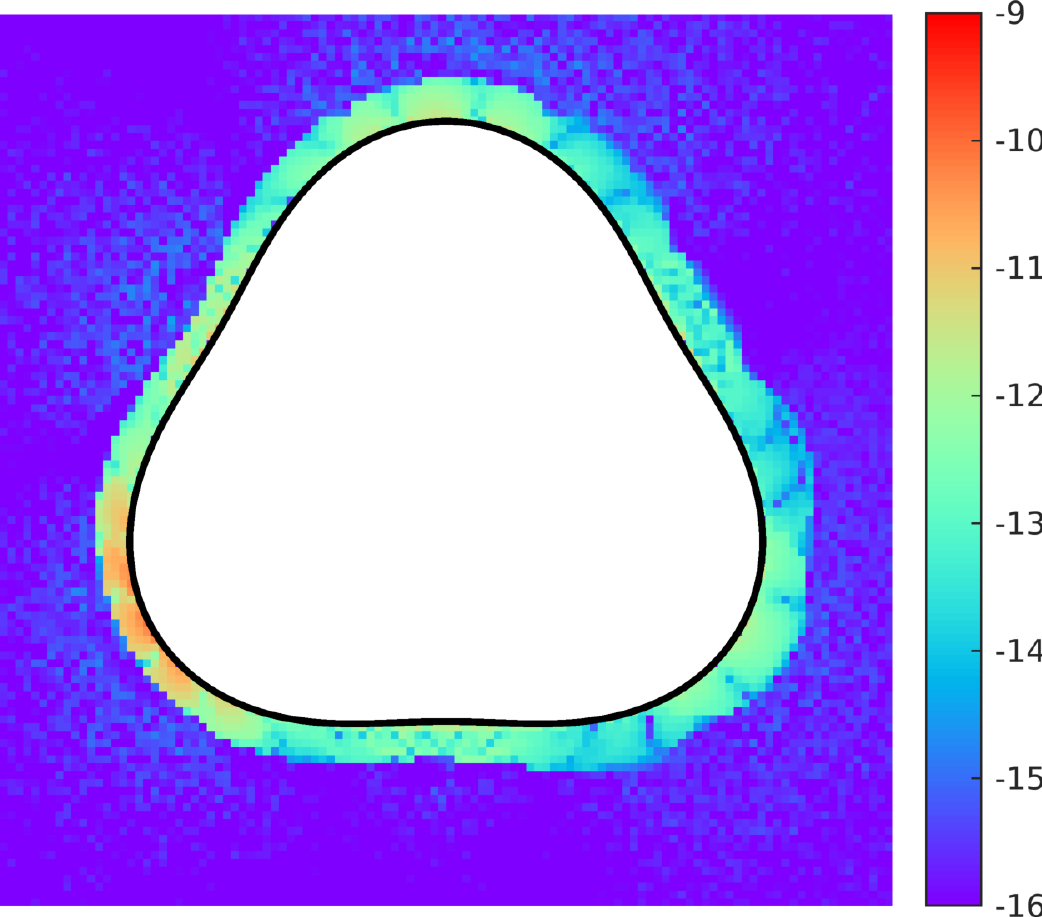}
            \includegraphics[height=0.28\textwidth]{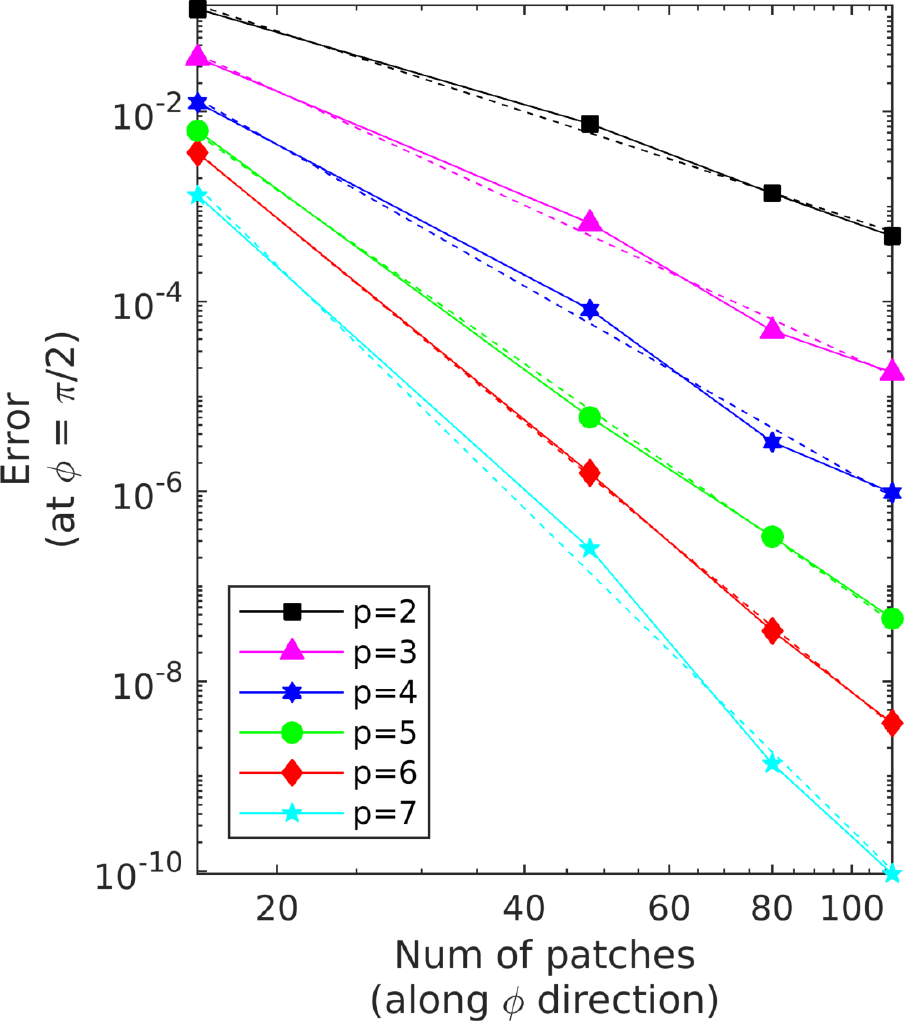}
            \caption{ Laplace DLP close evaluation scheme on a smooth, warped torus surface, using mean curvature as prescribed density. Left: Surface $\mathcal{M}$, with $w_c=0.065$, $w_m=3$ and $w_n=5$, showing panel divisions (red lines) intersecting $YZ$-plane and Nystr\"om nodes (black). The color indicates the magnitude of mean curvature. Middle: Cross-section view of the $\log_{10}$ relative error in the exterior of $\mathcal{M}$, in the $YZ$-plane ($\phi=\pi/2$), with $84\times 112$ patches. Right: Rate of convergence of the relative errors across the same shown slice with respect to number of panels along torodial direction, for $p=2,\cdots,7$.}\label{fig:torus_eval}
        \end{figure}
		
		\begin{table}[t]
  		\centering
			\begin{tabular}{|c||p{1.40cm}|p{0.65cm}||p{1.40cm}|p{0.65cm}||p{1.40cm}|p{0.65cm}||p{1.40cm}|p{0.65cm}||p{1.40cm}|p{0.65cm}|}
                	\hline
                	&\multicolumn{2}{c||}{$p=3$}& \multicolumn{2}{c||}{$4$} & \multicolumn{2}{c||}{$5$}& \multicolumn{2}{c||}{$6$} & \multicolumn{2}{c|}{$7$} \\ \hline
                	$n_{\theta}\times n_{\phi}$ & $\text{max } E_{rel}$ & $\hat{p}$ & $\text{max } E_{rel}$ & $\hat{p}$ & $\text{max } E_{rel}$ & $\hat{p}$ & $\text{max } E_{rel}$ & $\hat{p}$ & $\text{max } E_{rel}$ & $\hat{p}$ \\ \hline
                	12$\times$ 16   & 3.70e-02  &       & 1.25e-02  &       & 6.30e-03  &       & 3.68e-03  &       & 1.32e-03  &   \\
                	36$\times$ 48   & 6.72e-04  &  3.65 & 8.27e-05  &  4.57 & 6.05e-06  &  6.32 & 1.56e-06  &  7.07 & 2.51e-07  &  7.80\\
                	60$\times$ 80   & 4.92e-05  &  5.12 & 3.32e-06  &  6.30 & 3.36e-07  &  5.66 & 3.40e-08  &  7.50 & 1.35e-09  &  10.2\\
                	84$\times$ 112  & 1.79e-05  &  3.00 & 9.67e-07  &  3.67 & 4.59e-08  &  5.91 & 3.61e-09  &  6.66 & 9.37e-11  &  7.94\\
                	\hline
            \end{tabular}
			\caption{ Laplace DLP close evaluation scheme using mean curvature as prescribed density in the exterior of a smooth, warped torus surface parametrized by $(\theta,\phi) \in [0,2\pi)^2$, with $w_c=0.065$, $w_m=3$ and $w_n=5$. A cross-section on the $YZ$-plane ($\phi = \pi/2$) is chosen to study the convergence. We report both the maximum relative error and the observed convergence rate ($\hat{p}$) across the same slice as the number of panels are increased. }\label{tab:torus_eval_error}
		\end{table}	
		
	  \emph{Example 2: ``Cushion" geometry.}	
		In this example, we consider the cushion surface $\mathcal{M}$ from Ref.  \cite{perez2019harmonic} defined by 
		\begin{equation}
			\begin{aligned}
				\mathcal{M} = \boldsymbol{r}(\theta,\phi) = \left( f\left(\theta,\phi\right)\cos\left(\theta\right)\cos\left(\phi\right),\  f\left(\theta,\phi\right)\sin\left(\theta\right)\cos\left(\phi\right),\  f\left(\theta,\phi\right)\sin\left(\phi\right) \right)
			\end{aligned} 
		\label{cushion}\end{equation} 
		where $f(\theta,\phi)=\left(4/5+1/2\left(\cos\left(2\theta\right)-1\right)\left(\cos\left(4\phi\right)-1\right)\right)^{1/2}$. The density $\mu$ is set to the mean curvature \eqref{eq:curvature}. As in \cite{perez2019harmonic}, we discretize $\mathcal{M}$ using a set of non-overlapping patches and tensor product grids in each patch (Fig. \ref{fig:cushion_eval}, left). The DLP is evaluated on two planes, intersecting the surface at $\phi = 3\pi/16$ and $\phi = 27\pi/16$, as shown in \ref{fig:cushion_eval}.  The relative errors are plotted as the number of patches are increased for the case of $p =7$. While the experimental setup is slightly different from that in \cite{perez2019harmonic}, our goal is to showcase that several more digits of accuracy can be obtained using higher order close evaluation schemes. For example, from Fig. \ref{fig:cushion_eval}, we observe that around 10-digits of accuracy (or better) can be achieved at targets that are arbitrarily close to the surface on both the planes. 
		\begin{figure}[h!]
            \centering
            \includegraphics[height=0.28\textwidth]{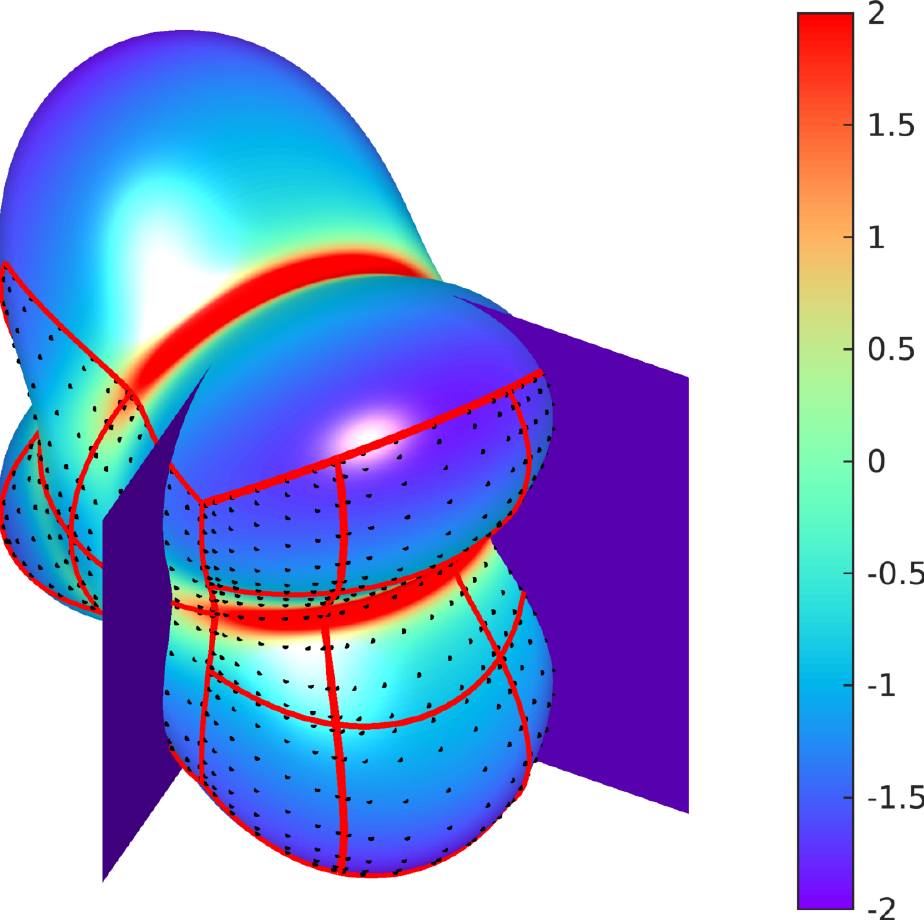}
            \includegraphics[height=0.28\textwidth]{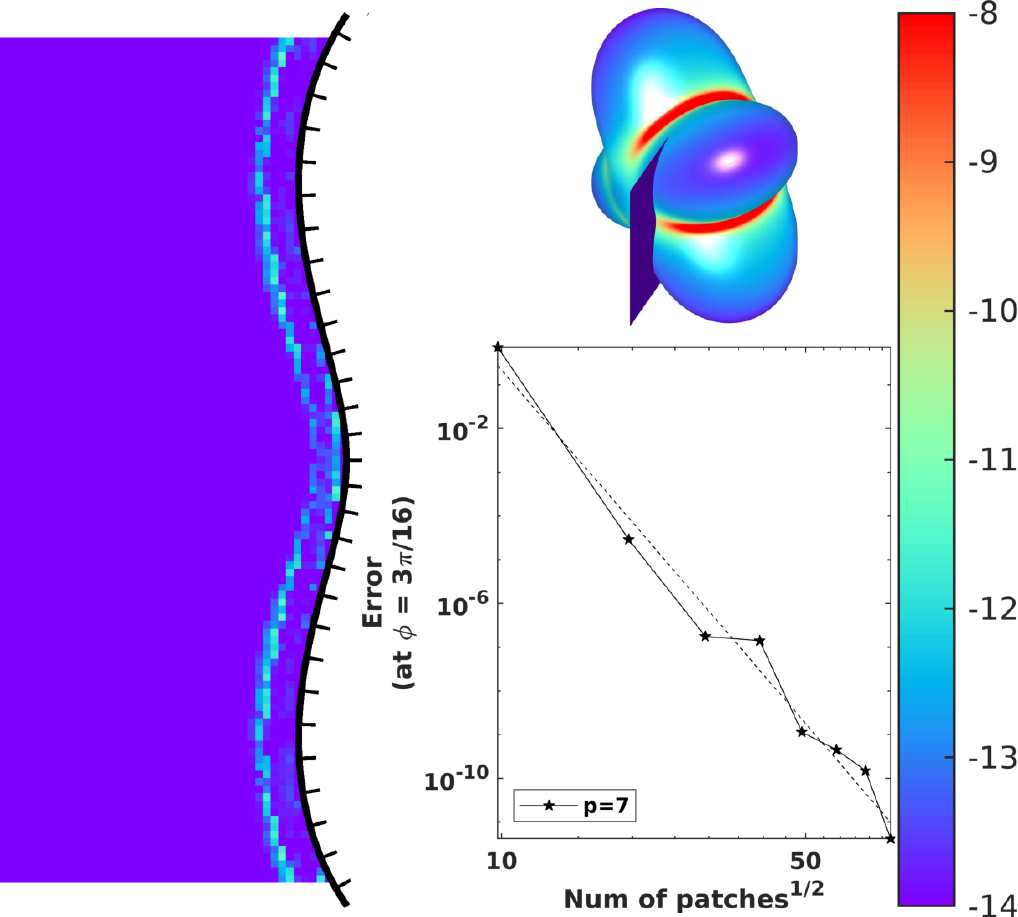}
            \includegraphics[height=0.28\textwidth]{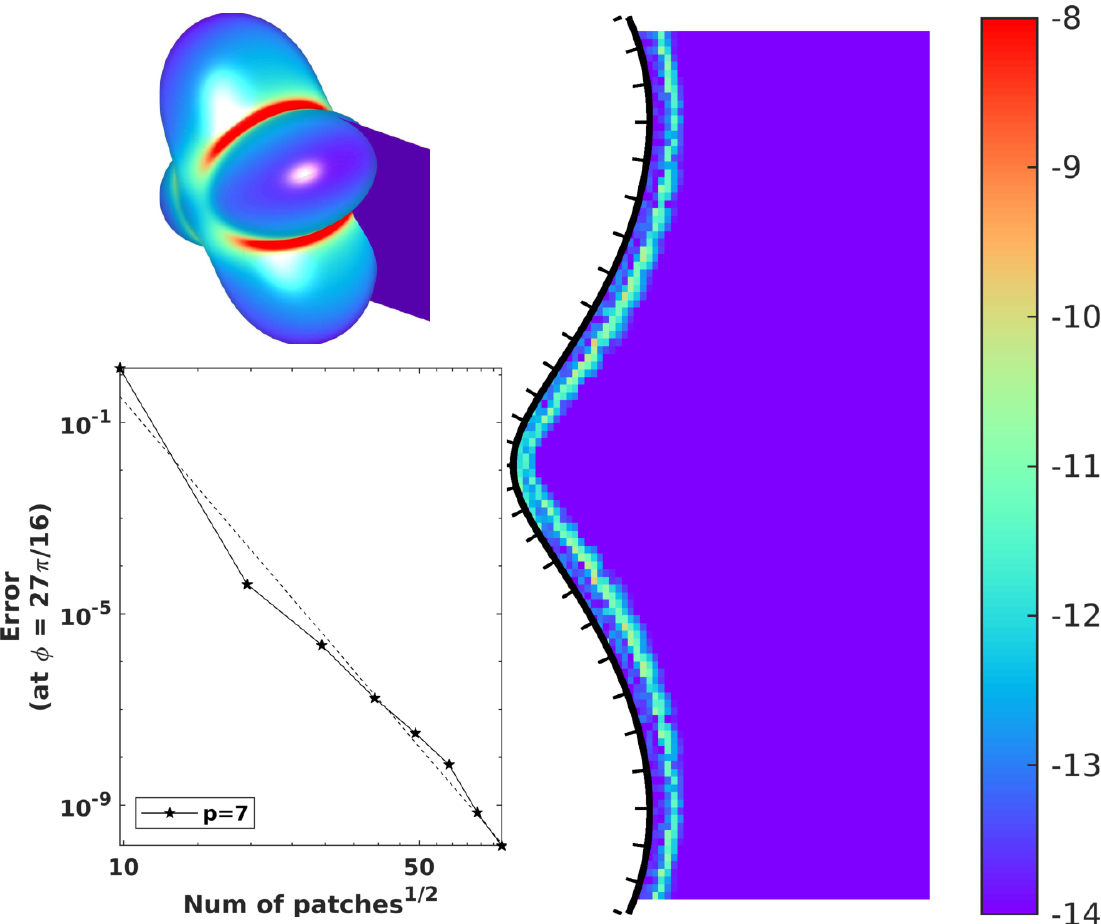}
            \caption{ Laplace DLP close evaluation on a cushion-shaped geometry. Left: Illustration of the surface discretization with non-overlapping patches. The magnitude of mean curvature is indicated by the color. The solution is evaluated on the shown slices. Middle: Cross-section view on the plane $\phi = 3\pi/16$ of the $\log_{10}$ relative error in the exterior of the cushion. The inset plots the relative error corresponding to $p=7$ as a function of the number of patches. Max relative error is $4.1314\times 10^{-12}$ with a total number of $6144$ ($32\times 32 \times 6$) patches. Current surface discretization is shown by the ticks ($'\vert'$) along the surface. Right: The $\log_{10}$ relative error in the exterior of the cushion on the plane $\phi = 27\pi/16$ with a total number of $6144$ patches. Max relative error is $1.4157\times 10^{-10}$.}\label{fig:cushion_eval}
        \end{figure}
        
      \emph{Example 3: ``Bunny" geometry.}
        Finally, we showcase that when a smooth surface is given without $(\theta,\phi)$ parametrization, a local correction could still be implemented to higher order based on a local set of control points and a high order polynomial interpolation. The bunny geometry is taken from a standard mesh library. We use an interactive high-quality quad remeshing tool developed in \cite{takayama2013sketch} to return a collection of patches and nodes within. We then construct a $6^{\text{th}}$ order polynomial approximation to each patch, and generate Gauss-Legendre quadrature nodes for naive evaluation and boundaries of each patch for applying our close evaluation scheme. We use a randomly chosen density function $\mu(x,y,z)=e^{xy}-1+x+\sin(x^4+1/2y^3)+y-1/2y^2+1/5y^6+z$ on the bunny surface. The results on one patch are shown in Fig. \ref{fig:bunny} to demonstrate the performance.
		
	\subsection{Laplace BVP test}
        For this numerical experiment, we solve Laplace interior boundary value problems (BVP) inside two cruller \eqref{eq:curller} domains, one with higher curvature compared to the other, as shown in Figs. \ref{fig:torus_bvp1} and \ref{fig:torus_bvp2}. The boundary data is generated from a superposition of randomly distributed point sources located exterior to the domain, i.e., we evaluate the function $g(\vv{r})=\sum_{j=1}^{N_s} G(\vv{r}-\vv{r}_j) h_j$, where $\vv{r}\in \mathcal{M}$, the sources $\vv{r}_j$ are located in the exterior and the source strengths $h_j$ are set to some random values. Starting from this boundary data, the BIE \eqref{eq:laplace} can be solved for unknown density function $\mu$. We use the quadrature scheme and BIE solver developed in \cite{barnett2019high} for this purpose. The resulting density $\mu$ for both geometries are plotted in Fig. \ref{fig:torus_bvp1} (left) and Fig. \ref{fig:torus_bvp2} (left). The numerical solution of the BVP is evaluated on the $\phi=\pi/8$ plane using our close evaluation scheme with $p=7$. It is then compared against the exact solution $u_{\text{exact}}(\vv{r}')=\sum_{j=1}^{N_s} G(\vv{r}',\vv{r}_j)h_j$, where $\vv{r}'\in\mathbb{D}$. We can make the following observations from Figs. \ref{fig:torus_bvp1} and \ref{fig:torus_bvp2}: (i) the accuracy is uniform throughout the interior (that is, no degradation at targets close to the boundary), (ii) similar to the exterior problem case, order of convergence is consistent with the basis function space used, and (iii) level of accuracy achieved is consistent with the complexity of the geometry.  
      
		\begin{figure}[h!]
    		\centering
    		\includegraphics[height=0.28\textwidth]{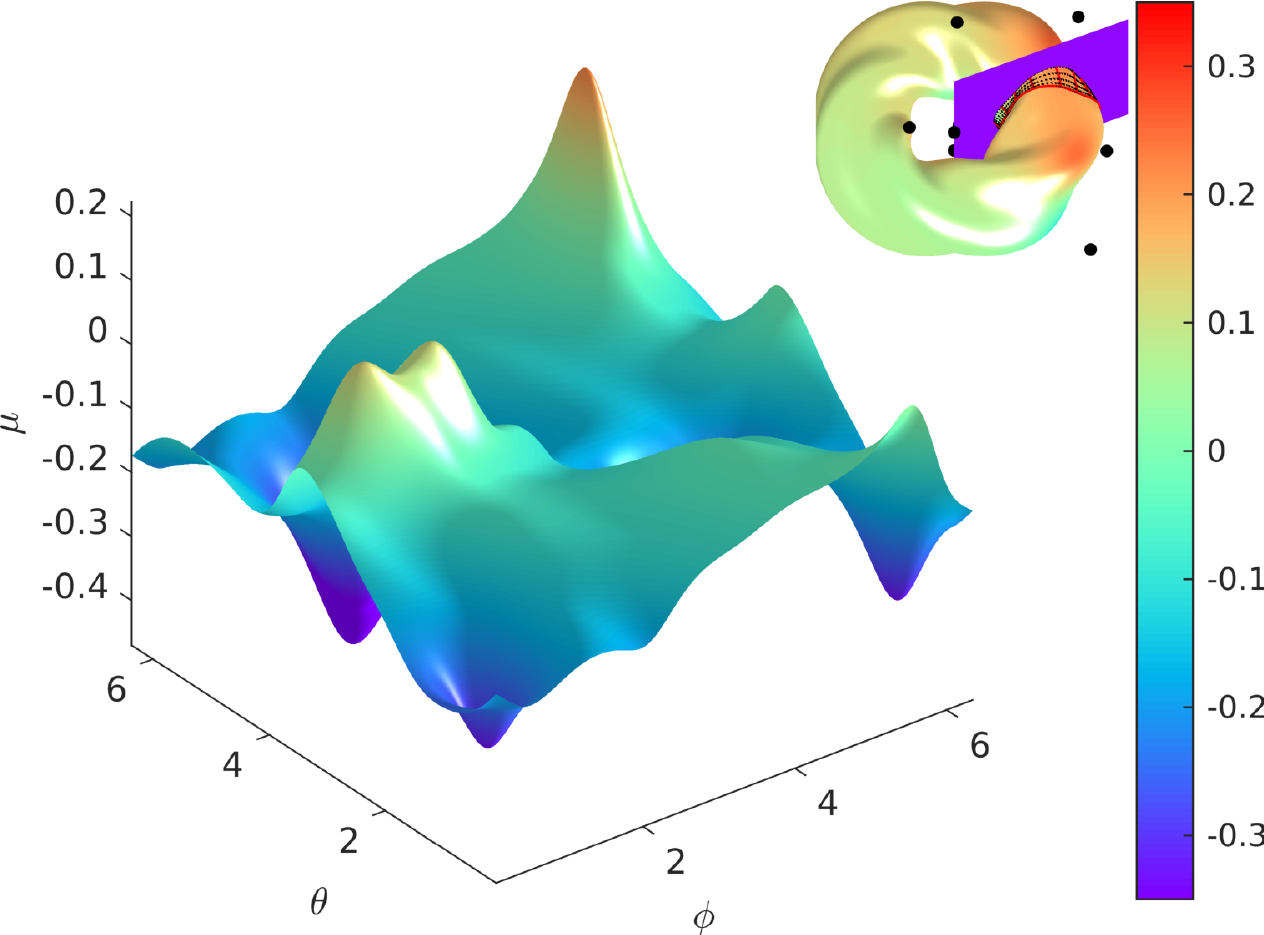}
    		\includegraphics[height=0.28\textwidth]{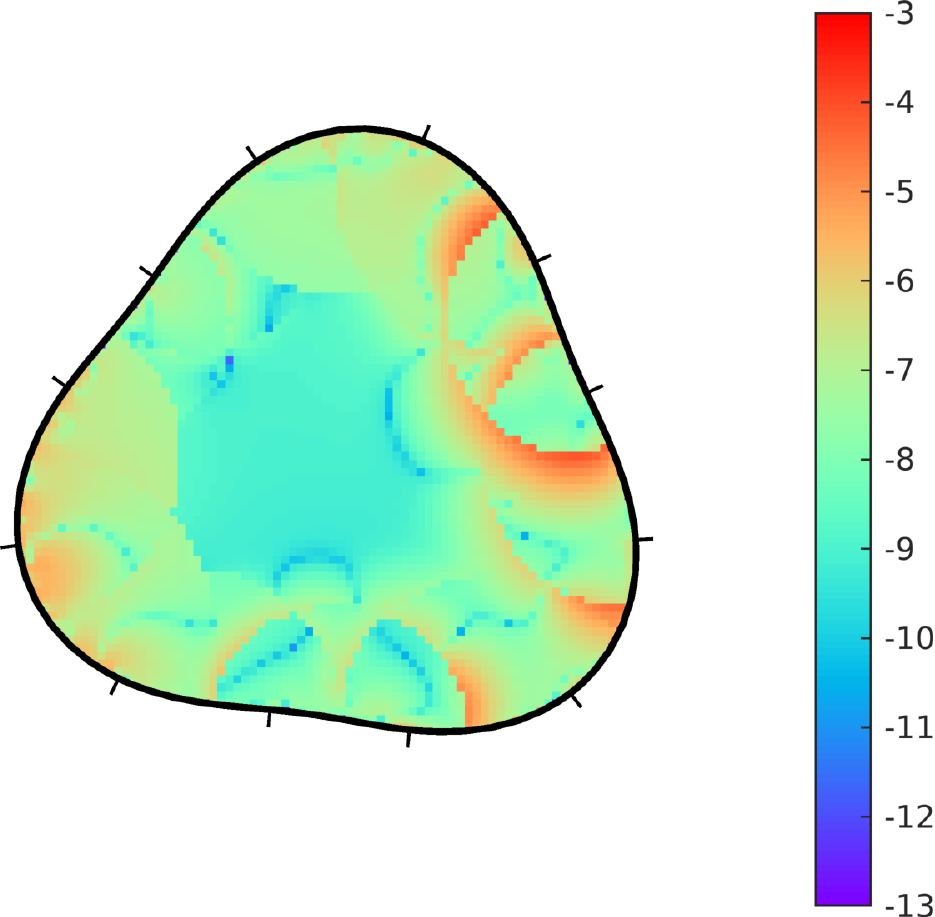}
			\includegraphics[height=0.28\textwidth]{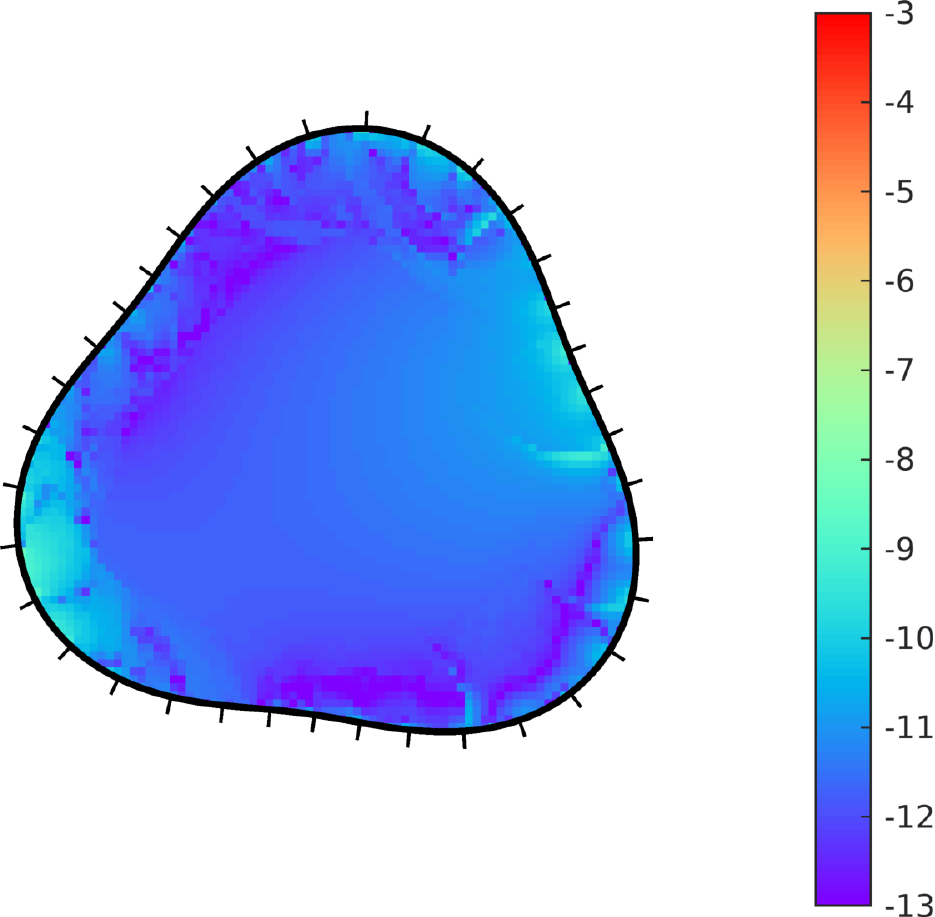}
			\caption{ Solution of Laplace BVP in the interior of a torus, using an indirect DLP formulation \eqref{eq:laplace}. Left: Density function plotted as a function of torodial and poloidal directions. The inset at the upper right corner shows the geometry whose surface color indicates the Dirichlet data due to a few randomly placed sources (black dots) in the exterior. The solution is evaluated on the shown slice. Here the shape parameters in \eqref{eq:curller} were set to $w_c=0.065$, $w_m=3$ and $w_n=5$. Middle: Cross-section view of the $\log_{10}$ relative error on the plane $\phi=\pi/8$. Max relative error is $7.1761\times 10^{-5}$ with $12\times 16$ panels. Right: The $\log_{10}$ relative error on the same shown slice with $36\times 48$ panels. Max relative error is $3.7405\times 10^{-9}$.}\label{fig:torus_bvp1}
        \end{figure}
		
		\begin{figure}[h!]
    		\centering
    		\includegraphics[height=0.28\textwidth]{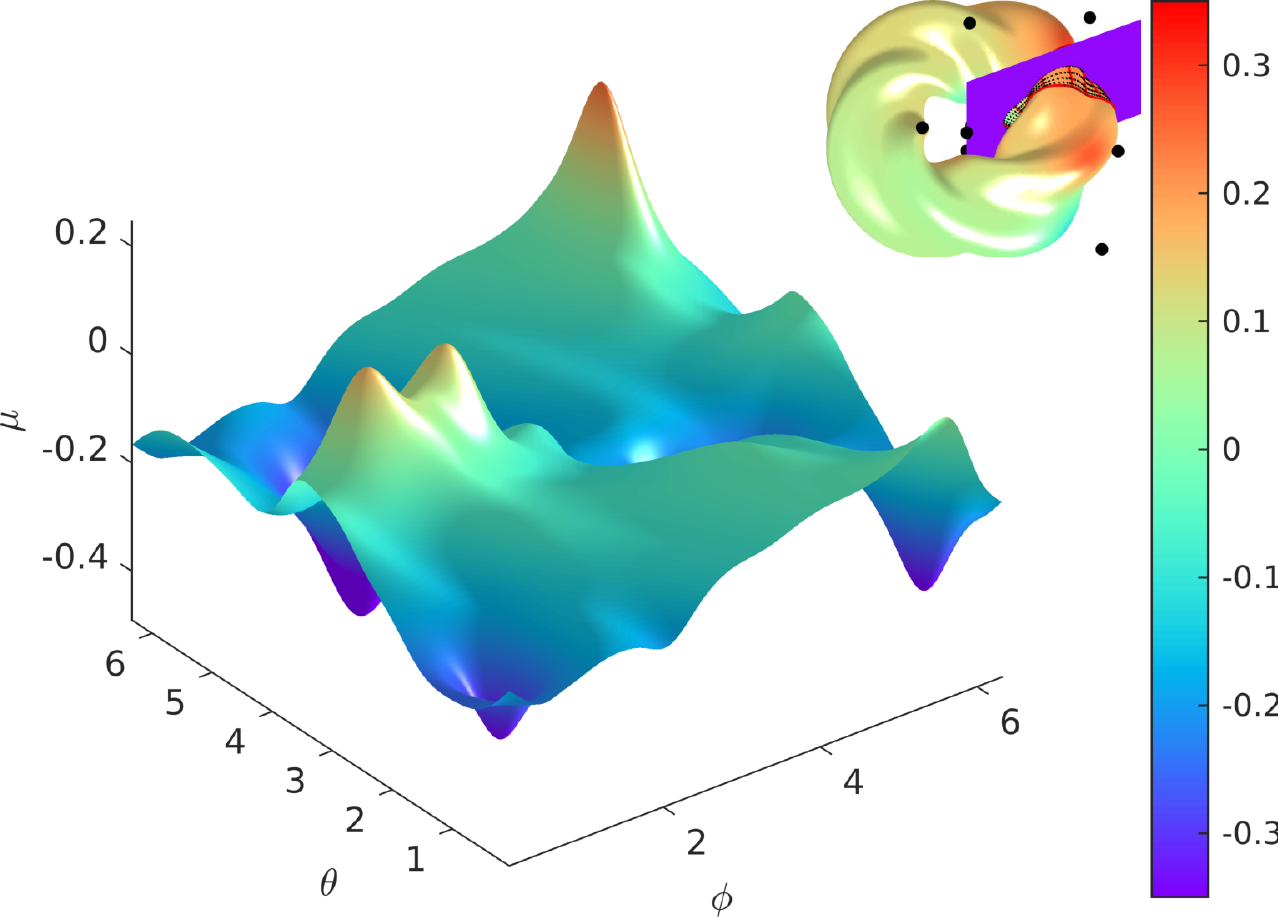}
    		\includegraphics[height=0.28\textwidth]{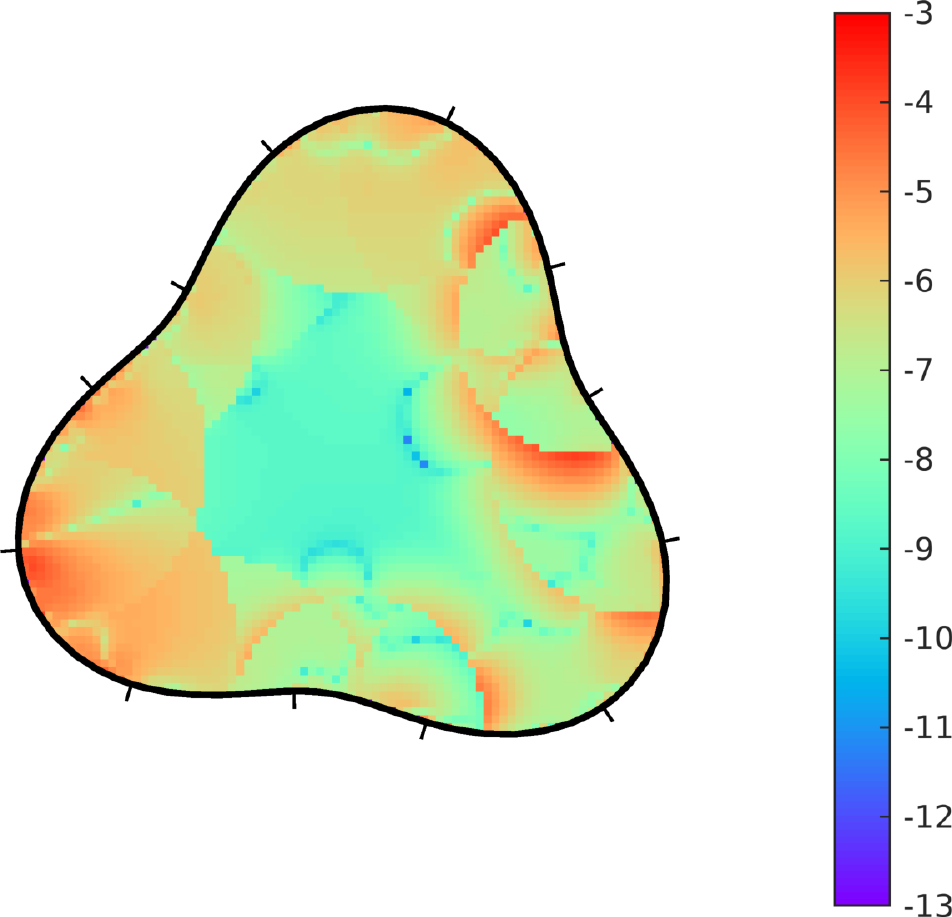}
    		\includegraphics[height=0.28\textwidth]{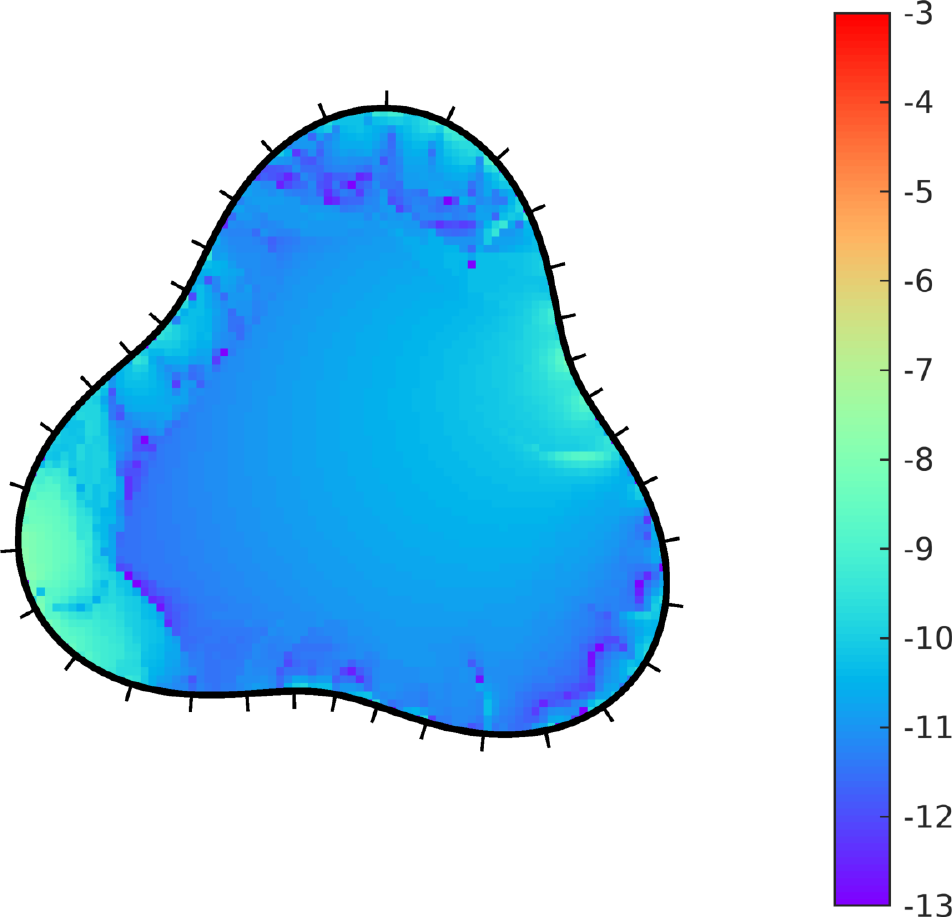}
    		\caption{ Same setup as in Fig. \ref{fig:torus_bvp1} but with shape parameters $w_c=0.1$, $w_m=3$ and $w_n=5$ (higher curvature). The max relative error is $1.6098\times 10^{-4}$ with $12\times 16$ panels (middle) and  with $36\times 48$ panels, it is $1.6667\times 10^{-8}$ (right). }
	        \label{fig:torus_bvp2}
        \end{figure}

\section{Conclusions}
\label{Concl}
In summary, we presented a high-order technique for evaluating nearly singular integrals for Laplace layer potentials in three dimensions and demonstrated its efficacy on a range of test problems. This scheme has modest requirements: it can work on any user-supplied surface mesh directly to solve the close evaluation problem up to the level of accuracy commensurate with that of the given data. Moreover, to some extent, this scheme is {\em dimension-agnostic}. It is intriguing to note that if we carry out the same steps for two-dimensional DLPs, we will likely recover the scheme of Helsing-Ojala \cite{helsing2008evaluation}. There, nearly singular integrals were computed using a quadrature scheme that employs piecewise complex monomial approximation on panels, Cauchy’s theorem and recurrence relations. In our case, we can use harmonic polynomials in two dimensions---which are closely related to complex monomials---for density approximation and the DLP can be transformed from a $1$-form line integral to $0$-form antiderivative evaluation (i.e., using \eqref{eq:alpha} and \eqref{eq:omega}). 

We plan to extend our work on several fronts. Our immediate next step is to integrate the close evaluation routine with an open-source FMM package (e.g., \cite{greengard2021fast}) and test its performance on large-scale examples. Another natural direction is to consider various other elliptic PDE kernels including Helmholtz, Stokes and Navier kernels. As indicated earlier, such a task is non-trivial since the density approximation likely needs to be modified and new recurrences for 1-forms need to be derived. 
Lastly, accurate three-dimensional close evaluation schemes open up possibilities to investigate physical phenomena that are otherwise hard to simulate including chain formation and chaotic behavior in vesicle electrohydrodynamics \cite{veerapaneni2016integral, wu2019electrohydrodynamics}, flows through complex geometries \cite{marple2016fast, wu2019solution} and self-assembly of active particles \cite{yan2020scalable, kohl2021fast}. We plan to generalize these previous works to large-scale three-dimensional flows with arbitrary particle shapes. 

\section{Acknowledgements} 
We thank Alex Barnett, Charles Epstein, Leslie Greengard and Manas Rachh for many useful discussions pertaining to this work. We acknowledge support from NSF under grants DMS-1719834, DMS-1454010 and DMS-2012424, and the Mcubed program at the University of Michigan. The work of SV was also supported by the Flatiron Institute, a division of the Simons Foundation.

\appendix
\section{Proof of Lemma~\ref{lemma:potential}}\label{appx:verify_poin}
	\begin{proof}
        Denoting $P(\vv{r}) = \int_0^1\left(tzg_2(t\vv{r})-tyg_3(t\vv{r})\right)dt$, $Q(\vv{r}) = \int_0^1\left(txg_3(t\vv{r})-tzg_1(t\vv{r})\right)dt$, and $R(\vv{r}) = \int_0^1\left(tyg_1(t\vv{r})-txg_2(t\vv{r})\right)dt$, the exterior derivative of \eqref{eq:poincare_r3}, $\omega = P(\vv{r}) dx+Q(\vv{r})dy+R(\vv{r})dz$, is 
        \begin{equation}
                \begin{aligned}
                        d\omega = \left(\frac{\partial R}{\partial y}-\frac{\partial Q}{\partial z}\right)dy\wedge dz + \left(\frac{\partial P}{\partial z}-\frac{\partial R}{\partial x}\right)dz\wedge dx + \left(\frac{\partial Q}{\partial x}-\frac{\partial P}{\partial y}\right)dx\wedge dy.
                \end{aligned}
        \end{equation}
        The first term, $\frac{\partial R}{\partial y}-\frac{\partial Q}{\partial z}$, can be expanded as 
        \[
                \begin{aligned}
                         &\frac{\partial }{\partial y}\left(\int_0^1\left(tyg_1(t\vv{r})-txg_2(t\vv{r})\right)dt\right)-\frac{\partial }{\partial z}\left(\int_0^1\left(txg_3(t\vv{r})-tzg_1(t\vv{r})\right)dt\right)\\
                         =&\int_0^1 \left(tg_1(t\vv{r})+t^2yg_{1,y}(t\vv{r})-t^2xg_{2,y}(t\vv{r})\right)dt - \int_0^1 \left(t^2xg_{3,z}(t\vv{r})-tg_1(t\vv{r})-t^2zg_{1,z}(t\vv{r})\right)dt\\
                         =&\int_0^1\left(2tg_1(t\vv{r})+t^2yg_{1,y}(t\vv{r})+t^2zg_{1,z}(t\vv{r})-t^2x\left(g_{2,y}(t\vv{r})+g_{3,z}(t\vv{r})\right)\right)dt\\
                         =&\int_0^1\left(2tg_1(t\vv{r})+t^2yg_{1,y}(t\vv{r})+t^2zg_{1,z}(t\vv{r})+t^2xg_{1,x}(t\vv{r})\right)dt \qquad (\text{since}\quad \nabla \cdot \vv{g} = 0)\\
                         =&\int_0^1\frac{d}{dt}(t^2g_1(t\vv{r}))dt\, = g_1(\vv{r}).
                \end{aligned}
        \]
         Treating the other two terms, $\frac{\partial P}{\partial z}-\frac{\partial R}{\partial x}$ and $\frac{\partial Q}{\partial x}-\frac{\partial P}{\partial y}$, similarly, we get the result $d\omega = g_1dy\wedge dz + g_2dz\wedge dx + g_3dx\wedge dy$.
	\end{proof}       

\section{Recurrence relations for evaluating moments}\label{appx:LMN}
Here, we present recurrence relations for evaluating moments of high-order monic polynomials w.r.t certain kernels that arise in $1$-form construction. Together with $M_k$ that is required in \eqref{eq:1-form}, we need $L_k$ and $N_k$, defined below, for evaluating other Laplace layer potentials:  
    \begin{equation}
        \begin{aligned}
            L_k(\vv{r}',\vv{r}) = \int_0^1 \frac{t^k}{|t\vv{r}-\vv{r}' |^{5}}dt,\quad M_k(\vv{r}',\vv{r}) = \int_0^1 \frac{t^k}{|t\vv{r}-\vv{r}' |^{3}}dt, \quad  N_k(\vv{r}',\vv{r}) =\int_0^1 \frac{t^k}{|t\vv{r}-\vv{r}' |}dt
        \end{aligned}
     \end{equation}
Using integration by parts and after some algebra, we can arrive at the following recurrences for evaluating the above moments:
     \begin{equation}
        \begin{cases}
                L_k = \frac{2 \vv{r}' \cdot \vv{r} }{| \vv{r} |^2}L_{k-1}-\frac{| \vv{r}' |^2}{| \vv{r} |^2}L_{k-2}+\frac{1}{| \vv{r} |^2}M_{k-2}\\
                M_k = \frac{ \vv{r}' \cdot \vv{r} }{| \vv{r} |^2}M_{k-1}+  \frac{k-1}{| \vv{r} |^2}N_{k-2}- \left. \frac{1}{| \vv{r} |^2}\frac{t^{k-1}}{|t \vv{r} - \vv{r}' |} \right\rvert_0^1\\
                N_k = \frac{2k-1}{k}\frac{\vv{r}' \cdot \vv{r}}{| \vv{r} |^2} N_{k-1} -  \frac{k-1}{k} \frac{| \vv{r}' |^2}{| \vv{r} |^2} N_{k-2} + \left. \frac{1}{| \vv{r} |^2}\frac{t^{k-1}|t \vv{r} - \vv{r}' |}{k}\right\rvert_0^1
        \end{cases}
     \end{equation}
The base conditions for these recurrences can also easily be derived as
\renewcommand{\arraystretch}{2}
        \begin{equation} \label{eq:base}
        \setlength{\jot}{5pt}
                \begin{aligned}
                        N_0 & = \frac{1}{| \vv{r} |}\left( \log\left(| \vv{r} || \vv{r} - \vv{r}' |+| \vv{r} |^2-\left( \vv{r}' \cdot \vv{r} \right)\right) -\log\left(| \vv{r}' || \vv{r} |-\left( \vv{r}' \cdot \vv{r} \right)\right)\right), \\
                        N_1 & = \frac{1}{| \vv{r} |^2}\left( | \vv{r} - \vv{r}' |-| \vv{r}' |\right) + \frac{\left( \vv{r}' \cdot \vv{r} \right)N_0}{| \vv{r} |^2},  \\
                        M_0 &= \frac{| \vv{r} |}{| \vv{r} |^2| \vv{r}' |^2-\left( \vv{r}' \cdot \vv{r} \right)^2}\left(\frac{| \vv{r} |^2-\left( \vv{r}' \cdot \vv{r} \right)}{| \vv{r} || \vv{r} - \vv{r}' |}+\frac{ \vv{r}' \cdot \vv{r} }{| \vv{r} || \vv{r}' |}\right),\\
                        M_1 &= \frac{1}{\left( \vv{r}' \cdot \vv{r} \right)^2-| \vv{r}' |^2| \vv{r} |^2}\left(\frac{- \vv{r}' \cdot \vv{r} +| \vv{r}' |^2}{| \vv{r} - \vv{r}' |}-| \vv{r}' |\right),\\
                        L_0 &= \frac{1}{\left( | \vv{r}' |^2| \vv{r} |^2-\left( \vv{r}' \cdot \vv{r} \right)^2\right)^2}\left(| \vv{r} |^2\frac{| \vv{r} |^2- \vv{r}' \cdot \vv{r} }{| \vv{r} - \vv{r}' |}-\frac{1}{3}\frac{\left(| \vv{r} |- \vv{r}' \cdot \vv{r} \right)^3}{| \vv{r} - \vv{r}' |^3}+| \vv{r} |^2\frac{ \vv{r}' \cdot \vv{r} }{| \vv{r}' |}-\frac{1}{3}\frac{\left( \vv{r}' \cdot \vv{r} \right)^3}{| \vv{r}' |^3}\right), \\
                        L_1 &= -\frac{1}{| \vv{r} |^2}\left(\frac{1}{3}\frac{1}{| \vv{r} - \vv{r}' |^3}-\frac{1}{3}\frac{1}{| \vv{r}' |^3}\right) + \frac{\left( \vv{r}' \cdot \vv{r} \right)}{| \vv{r} |^2}L_0.       
                \end{aligned}
        \end{equation}

\section{Second-order approximation scheme for Laplace double-layer potential}\label{appx:dlp_2nd_order}
Here, we provide further details on the steps outlined in Section \ref{sc:LapDeval} by considering the simpler $p = 2$ case and give explicit formulas for all the intermediate operators and functions. 
	\paragraph{Stage 1: Precomputation} 
	Recall that the first step is the change of coordinates wherein $\vv{r}^{(1,1)}$ becomes the origin and rest of the quadrature nodes are transformed accordingly. In the case of $p = 2$, there are eight unknowns, four elements each of the vectors $C^{(2,1)}$ and $C^{(2,2)}$ (defined in \eqref{eq:quaternion_xmatrix}). We can explicitly write the two vector equations obtained by applying \eqref{eq:quaternion_algo} as 
				\begin{equation}
					\begin{aligned}					 A[f^{(2,1)}](\tilde{\vv{r}}^{(2,1)})C^{(2,1)}+A[f^{(2,2)}](\tilde{\vv{r}}^{(2,1)})C^{(2,2)} & = U^{(2,1)}, \\
						  A[f^{(2,1)}](\tilde{\vv{r}}^{(2,2)})C^{(2,1)}+A[f^{(2,2)}](\tilde{\vv{r}}^{(2,2)})C^{(2,2)} & = U^{(2,2)}.
					\end{aligned}
				\end{equation}
				From \eqref{eq:quaternion_xmatrix}, we can expand the matrix operators as
				\begin{equation}
					\begin{aligned}
						A[f^{(2,1)}](\vv{r}) = \begin{pmatrix}
    						0 & -x & 0 & z \\
    						x & 0 & z & 0 \\
    						0 & -z & 0 & -x \\
    						-z & 0 & x & 0 
    						\end{pmatrix}\quad\text{and}\quad  A[f^{(2,2)}](\vv{r}) = \begin{pmatrix}
    						 0 & 0 & -y & z \\
    						 0 & 0 & z & y \\
    						 y & -z & 0 & 0\\
    						-z & -y & 0 & 0
    						\end{pmatrix}.
				\end{aligned}
				\end{equation}
	\paragraph{Stage 2: 2-to-1 form conversion and contour integration} The $1$-form $\omega$ for linear case can be carried out relatively easy. We only have two quaternionic $2$-forms (we omit \textasciitilde{} in $\tilde{\vv{r}}$):
				\begin{equation}
        			\begin{cases}
                		\alpha_0^{(2,1)} = &\frac{(y'-y)z}{|\vv{r}'-\vv{r}|^3}dy\wedge dz+\frac{-(z'-z)x-(x'-x)z}{|\vv{r}'-\vv{r}|^3}dz\wedge dx+\frac{(y'-y)x }{|\vv{r}'-\vv{r}|^3}dx\wedge dy \\
                		\alpha_1^{(2,1)} =& \frac{(x'-x)x+(z'-z)z}{|\vv{r}'-\vv{r}|^3}dy\wedge dz+\frac{ (y'-y)x}{|\vv{r}'-\vv{r}|^3}dz\wedge dx+\frac{(z'-z)x-(x'-x)z}{|\vv{r}'-\vv{r}|^3}dx\wedge dy\\
                		\alpha_2^{(2,1)} =& \frac{(y'-y)x}{|\vv{r}'-\vv{r}|^3}dy\wedge dz+\frac{ (z'-z)z-(x'-x)x}{|\vv{r}'-\vv{r}|^3}dz\wedge dx+\frac{-(y'-y)z}{|\vv{r}'-\vv{r}|^3}dx\wedge dy\\
                		\alpha_3^{(2,1)} =& \frac{-(x'-x)z+(z'-z)x}{|\vv{r}'-\vv{r}|^3}dy\wedge dz +\frac{-(y'-y)z}{|\vv{r}'-\vv{r}|^3}dz\wedge dx+\frac{-(z'-z)z-(x'-x)x}{|\vv{r}'-\vv{r}|^3}dx\wedge dy
        			\end{cases}
        		\end{equation}
        		\begin{equation}	
        			\begin{cases}
                		\alpha_0^{(2,2)} = &\frac{(y'-y)z+(z'-z)y}{|\vv{r}'-\vv{r}|^3}dy\wedge dz+\frac{-(x'-x)z}{|\vv{r}'-\vv{r}|^3}dz\wedge dx+\frac{-(x'-x)y}{|\vv{r}'-\vv{r}|^3}dx\wedge dy \\
                		\alpha_1^{(2,2)} =& \frac{(z'-z)z-(y'-y)y}{|\vv{r}'-\vv{r}|^3}dy\wedge dz+\frac{(x'-x)y}{|\vv{r}'-\vv{r}|^3}dz\wedge dx+\frac{-(x'-x)z }{|\vv{r}'-\vv{r}|^3}dx\wedge dy \\
                		\alpha_2^{(2,2)} =& \frac{(x'-x)y}{|\vv{r}'-\vv{r}|^3}dy\wedge dz + \frac{(y'-y)y+(z'-z)z}{|\vv{r}'-\vv{r}|^3}dz\wedge dx + \frac{(z'-z)y-(y'-y)z}{|\vv{r}'-\vv{r}|^3}dx\wedge dy \\
                		\alpha_3^{(2,2)} =& \frac{-(x'-x)z}{|\vv{r}'-\vv{r}|^3}dy\wedge dz+\frac{-(y'-y)z+(z'-z)y}{|\vv{r}'-\vv{r}|^3}dz\wedge dx+\frac{-(z'-z)z-(y'-y)y}{|\vv{r}'-\vv{r}|^3}dx\wedge dy
        			\end{cases}
     			\end{equation}
     			where superscript of $\alpha$ denotes which basis the differential $2$-form corresponds to, and subscript corresponds to index of its quaternion pair form. Corresponding $1$-forms are then given by,
     			\begin{equation}
        			\begin{cases}
                		\omega_0^{(2,1)} = & \left((xy^2+2xz^2)M_3-(y'xy+z'xz+x'z^2)M_2\right)dx + \left((yz^2-x^2y)M_3+(y'x^2-y'z^2)M_2\right)dy \\
                                		 & + \left((-2x^2z-y^2z)M_3+(z'x^2+x'xz+y'yz)\right) dz \\
                		\omega_1^{(2,1)} = & \left(-xyzM_3-(z'xy-y'xz-x'yz)M_2\right)dx + \left((x^2z+z^3)M_3+(z'x^2-2x'xz-z'z^2)M_2\right)dy\\
                                		 & + \left(-yz^2M_3+(-y'x^2+x'xy+z'yz)M_2\right) dz \\
                		\omega_2^{(2,1)} = & \left((x^2z-y^2z-z^3)M_3+(-x'xz+y'yz+z'z^2)M_2\right)dx + \left(2xyzM_3-2y'xzM_2\right)dy \\
                                		 & + \left((-x^3-xy^2+xz^2)M_3+(x'x^2+y'xy-z'xz)\right) dz \\
                		\omega_3^{(2,1)} = & \left(-x^2yM_3+(x'xy+cyz-y'z^2)M_2\right)dx + \left((x^3+xz^2)M_3-(x'x^2+2z'xz-x'z^2)M_2\right)dy \\
                                		 & + \left(-xyzM_3+(z'xy+y'xz-x'yz)M_2\right) dz 
        			\end{cases}
        		\end{equation}
        		\begin{equation}
        			\begin{cases}
                		\omega_0^{(2,2)} = & \left((xz^2-xy^2)M_3+(x'y^2-x'z^2)M_2\right)dx + \left((-x^2y-2yz^2)M_3+(x'xy+z'yz+y'z^2)M_2\right)dy \\
                                		 & + \left((x^2z+2y^2z)M_3+(-z'y^2-x'xz-y'yz)\right) dz \\
                		\omega_1^{(2,2)} = & \left(-2xyzM_3+2x'yzM_2\right)dx + \left((x^2-y^2z+z^3)M_3+(-x'xz+y'yz-z'z^2)M_2\right)dy\\
                                		 & + \left((x^2y+y^3-yz^2)M_3+(-x'xy-y'y^2+z'yz)M_2\right) dz \\
                		\omega_2^{(2,2)} = & \left((-y^2z-z^3)M_3-(z'y^2-2y'yz-z'z^2)M_2\right)dx + \left(xyzM_3+(z'xy-y'xz-x'yz)M_2\right)dy \\
                                		 & + \left(xz^2M_3+(-y'xy+x'y^2-z'xz)\right) dz \\
                		\omega_3^{(2,2)} = & \left((-y^3-yz^2)M_3+(y'y^2+2z'yz-y'z^2)M_2\right)dx + \left(xy^2M_3-(y'xy+z'xz-x'z^2)M_2\right)dy \\
                                		 & + \left(xyzM_3+(-z'xy+y'xz-x'yz)M_2\right) dz 
        			\end{cases}
        		\end{equation} 		
        		Now we have expression for the complete $1$-form $\omega$,
        		\begin{equation}
        			\begin{aligned}
        				\omega = \mu\left(\vv{r}^{(1,1)}\right)\omega_0^{(1,1)} + \Omega^{(2,1)}C^{(2,1)} + \Omega^{(2,2)}C^{(2,2)}, 
        			\end{aligned}
        		\end{equation}
        		where $\Omega^{(k,l)}=[\omega_0^{(k,l)},\omega_1^{(k,l)},\omega_2^{(k,l)},\omega_3^{(k,l)}]$. Lastly, the contour integral $\int_{\partial \tilde{D}} \omega$ is evaluated on each transformed patch.

\bibliographystyle{plain}
\bibliography{main}

\end{document}